\numberwithin{equation}{section}
\newtheorem{theorem}{Theorem}[section]
\newtheorem{proposition}[theorem]{Proposition}
\newtheorem{lemma}[theorem]{Lemma}
\newtheorem{corollary}[theorem]{Corollary}
\theoremstyle{definition}
\newtheorem{definition}[theorem]{Definition}
\def\Lra{\Leftrightarrow}
\def\tr{\text{tr}}
\DeclareMathOperator{\IM}{Im} \DeclareMathOperator{\RE}{Re}
\def\sR{\hbox{I\kern-.1667em\hbox{R}}}
\newcommand{\R}{\mathbb R}
\newcommand{\C}{\mathbb C}
\newcommand{\Z}{\mathbb Z}
\newcommand{\N}{\mathbb N}
\newcommand{\Q}{\mathbb Q}
\newcommand{\bH}{\mathbb H}
\def\tr{\hbox{Tr}}
\def\tr{\mathrm{tr}}
\begin{document}
\title[Determinants of Laplacians on Hilbert modular surfaces]
{Determinants of Laplacians on Hilbert modular surfaces}

\author[Y. Gon]{Yasuro Gon}
\email{ygon@math.kyushu-u.ac.jp}
\address{Faculty of Mathematics\\ Kyushu University\\
744 Motooka, Nishi-ku \\ Fukuoka 819-0395\\ Japan}

\thanks{2010 Mathematics Subject Classification. 11M36, 11F72, 58J52 \\
This work was partially supported by JSPS Grant-in-Aid for Scientific Research (C) no. 26400017.}

\date{\today}

\begin{abstract}
{We study regularized determinants of Laplacians acting on the space of Hilbert-Maass forms 
for the Hilbert modular group of a real quadratic field. We show that these determinants are described by
Selberg type zeta functions introduced in \cite{G0,G1}. }
\end{abstract}

\keywords{Hilbert modular surface; Selberg zeta function; Regularized determinant.}

\maketitle

\setcounter{tocdepth}{1}

\section{Introduction}

Determinants of the Laplacian $\Delta$ acting on the space of Maass forms on a hyperbolic Riemann surface $X$
are studied by many authors. (See, for example \cite{Sa1, E1, Ku, Ko}.) It is known that
the determinants of $\Delta$ are described by the Selberg zeta function (cf. \cite{Sel1}) for $X$.

On the other hand, two Laplacians $\Delta^{(1)}$, $\Delta^{(2)}$ act on the space of Hilbert-Maass forms 
on the Hilbert modular surface $X_{K}$ of  a real quadratic field $K$.
For this reason, it seems that there are no explicit formulas for ``Determinants of Laplacians'' on $X_K$ until now.
In this article we consider regularized determinants of the first Laplacian $\Delta^{(1)}$ acting on 
its certain subspaces $V_m^{(2)}$, indexed by $m \in 2 \N$. We show that
these determinants are described by
Selberg type zeta functions for $X_K$ introduced in \cite{G0,G1}. 

Let
$K/\Q$ be a real quadratic field with class number one and
$\mathcal{O}_K$ be the ring of integers of $K$. Put $D$ be the
discriminant of $K$ and $\varepsilon > 1 $ be the fundamental
unit of $K$. 
We denote the generator of $\mathrm{Gal}(K/\Q)$
by $\sigma$ and put $a' := \sigma(a)$ 
for $a \in K$. 
We also put
$\gamma' =  \Bigl( 
\begin{array}{cc}
a' & b' \\
c' & d'
\end{array} \Bigr)$ for 
$\gamma =  \Bigl( 
\begin{array}{cc}
a & b \\
c & d
\end{array} \Bigr) \in \mathrm{PSL}(2, \mathcal{O}_K)$.
Let $\Gamma_{K} = \{ (\gamma, \gamma') \, | \, 
\gamma \in \mathrm{PSL}(2, \mathcal{O}_K) \}$ 
be the Hilbert modular group of $K$.
It is known that $\Gamma_{K}$ is a
co-finite ({\it non-cocompact}) irreducible discrete subgroup 
of $\mathrm{PSL}(2, \R) \times \mathrm{PSL}(2, \R)$ and $\Gamma_K$ acts on 
the product $\bH^2$ of two copies of the upper half plane $\bH$ 
by component-wise linear fractional transformation.
$\Gamma_K$ have only one cusp $(\infty,\infty)$, i.e. 
$\Gamma_K$-inequivalent parabolic fixed point.
$X_K := \Gamma_K \backslash \bH^2$ 
is called the Hilbert modular surface.

Let $(\gamma, \gamma') \in \Gamma_{K}$ be hyperbolic-elliptic,
i.e, $|\tr(\gamma)|>2$ and $|\tr(\gamma')|<2$. 
Then the centralizer of hyperbolic-elliptic 
$(\gamma, \gamma')$ in $\Gamma_K$ is infinite cyclic.

\begin{definition}[Selberg type zeta function for $\Gamma_{K}$ with the weight $(0,m)$]
For an even integer $m \ge 2$, we define
\label{def:zeta}
\begin{equation}
Z_{m}(s) := \prod_{(p,p')\in \text{P}\Gamma_{\text{HE}}} 
\prod_{n=0}^{\infty} \Bigl( 1-e^{i (m-2) \omega} \, N(p)^{-(n+s)} 
\Bigr)^{-1} 
\quad \mbox{for $\RE(s) >1$.} 
\end{equation}
\end{definition}

Here, $(p,p')$ run through the set of primitive hyperbolic-elliptic 
$\Gamma_K$-conjugacy classes of $\Gamma_K$, and $(p,p')$ is conjugate 
in $\mathrm{PSL}(2,\R)^2$ to
\[ (p, p') \sim \Bigl( 
\Bigl( 
\begin{array}{cc}
N(p)^{1/2} & 0 \\
0 & N(p)^{-1/2}
\end{array} \Bigr), 
\, 
\Bigl( 
\begin{array}{cc}
\cos \omega & - \sin \omega \\
\sin \omega & \cos \omega
\end{array} \Bigr)\Bigr).
\]
Here, $N(p)>1$, $\omega \in (0,\pi)$ and $\omega \notin \pi \Q$.
The product is absolutely convergent for $\RE(s)>1$.

Analytic properties of $Z_m(s)$ are known.
\begin{theorem}[{\cite[Theorems 5.3 and 6.5]{G1}}] \label{th:s1} 
For an even integer $m \ge 2$, 
$Z_m(s)$ a priori defined for 
$\RE(s) >1$ has a meromorphic extension over the whole complex plane. 
\end{theorem}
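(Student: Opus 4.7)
The strategy I would follow is the classical Selberg-style route: derive the meromorphic continuation of $Z_m(s)$ by identifying its logarithmic derivative with the hyperbolic-elliptic part of the geometric side of a suitable Selberg trace formula on $\Gamma_K \backslash \bH^2$, and then transferring the meromorphy from the (automatically meromorphic) remaining spectral and geometric terms.

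First, I would take the logarithmic derivative of the Euler product. Expanding $-\log(1 - x) = \sum_{k\ge 1} x^k/k$ and differentiating in $s$, one gets
\begin{equation*}
\frac{Z_m'(s)}{Z_m(s)} = \sum_{(p,p')\in \mathrm{P}\Gamma_{\mathrm{HE}}} \sum_{n=0}^{\infty} \sum_{k=1}^{\infty} \log N(p)\, e^{i k(m-2)\omega}\, N(p)^{-k(n+s)},
\end{equation*}
which, after summing the inner geometric series in $n$, is a sum over \emph{all} (not only primitive) hyperbolic-elliptic conjugacy classes, weighted by $\log N(p_0)\,N(p)^{-s}/(1-N(p)^{-1})$ times a character in the rotation angle $\omega$. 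This is exactly the shape of the hyperbolic-elliptic geometric contribution in a Selberg trace formula on $X_K$.

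Next, I would apply the trace formula for $\Gamma_K$ to an appropriately chosen pair of test functions. On the second $\mathrm{PSL}(2,\R)$-factor I would use a test function adapted to the holomorphic discrete series of weight $m$, so that its orbital integral on the elliptic element $\begin{pmatrix} \cos\omega & -\sin\omega \\ \sin\omega & \cos\omega \end{pmatrix}$ reproduces the factor $e^{i(m-2)\omega}$; on the first factor I would take the resolvent-type (Selberg) test function depending on the spectral parameter $s$. With this choice, the geometric side splits into the identity contribution, the hyperbolic-hyperbolic contribution, the elliptic-elliptic contribution, the parabolic/cuspidal contribution, and the hyperbolic-elliptic contribution. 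The elliptic-elliptic and pure-hyperbolic pieces are killed or rendered harmless by the discrete-series test function on the second factor, leaving the hyperbolic-elliptic piece to match $Z_m'/Z_m$ (up to terms that are manifestly meromorphic in $s$). The spectral side is a trace over $V_m^{(2)}$-type subspaces plus an Eisenstein integral, both of which yield meromorphic functions of $s$ with controlled poles.

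Having written $Z_m'(s)/Z_m(s)$ as a difference of meromorphic functions, one concludes that $Z_m'/Z_m$ admits a meromorphic continuation to all of $\bC$, and then $Z_m(s)$ itself does by integration (the integration constant is fixed by the $\RE(s)>1$ definition). The main obstacle in carrying this out cleanly is the non-cocompactness of $\Gamma_K$: because $X_K$ has one cusp, the trace formula contains a parabolic term and an Eisenstein (scattering) contribution, and one must show that these contribute only a meromorphic function of $s$ without spurious divergences interfering with the hyperbolic-elliptic sum. A secondary technical point is isolating the hyperbolic-elliptic orbits from the mixed conjugacy classes and justifying the interchange of summations needed to pass from the Euler product over primitive classes to the orbital sum over all classes; both are handled by choosing the test function on the elliptic factor to lie in the matrix coefficient space of the weight-$m$ discrete series, which orthogonally projects away every contribution except the desired one.
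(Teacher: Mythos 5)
Your plan is essentially the route actually taken: the paper does not reprove this statement but cites \cite[Theorems 5.3 and 6.5]{G1}, where the continuation of $Z_m(s)$ is obtained exactly as you describe --- by matching $Z_m'/Z_m$ with the hyperbolic-elliptic term in a (difference of the) Selberg trace formula with a resolvent-type test function on the first factor and a weight-$m$ selection on the second, as reflected in the displayed resolvent identities in Section 5 of this paper. The only presentational difference is that in \cite{G1} the isolation of the weight-$m$ discrete series on the second factor is achieved by taking differences of trace formulas for weights $(0,m)$ and $(0,m-2)$ rather than by a single discrete-series test function, but this is the same mechanism you invoke.
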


In this article, we also consider ``the square root of $Z_2(s)$''. 
\begin{definition}[$\sqrt{Z_{2}(s)}$]
\begin{equation}
\begin{split}
\sqrt{Z_{2}(s)} &:= \prod_{(p,p') \in \text{P}\Gamma_{\text{HE}}} 
\prod_{n=0}^{\infty} \Bigl( 1-N(p)^{-(n+s)} 
\Bigr)^{-1/2}  \\
&=   \exp  \Bigl(    \frac{1}{2} \sum_{(p,p')}  
\sum_{k=1}^{\infty}  \frac{1}{k} \frac{N(p)^{-ks}}{1-N(p)^{-k}} 
\Bigr)   \quad \mbox{for $\RE(s) >1$}.
\end{split} 
\end{equation}
\end{definition}

By \cite[Theorem 6.5]{G1} and the fact that the Euler characteristic of $X_K$ is even
(See Lemma \ref{EC}), we see that $\frac{d}{ds} \log Z_2(s)$ has even integral residues at any poles.
Therefore, we find that $\sqrt{Z_2(s)}$ has a meromorphic continuation to the whole complex plane.

Let us introduce the completed Selberg type zeta functions $\widehat{Z}_2^{\frac12}(s)$ and $\widehat{Z}_m(s)$ $(m \ge 4)$, 
which are invariant under $s \to 1-s$. (See \cite[Theorems 5.4 and 6.6]{G1}.)
\begin{definition}[Completed Selberg zeta functions] \label{zhat}
\begin{equation} \label{z2hat}
\widehat{Z}_2^{\frac12}(s) := \sqrt{Z_2(s)}  \, Z_{\mathrm{id}}^{\frac12} (s)  \, Z_{\mathrm{ell}}^{\frac12} (s;2) 
                                       \, Z_{\mathrm{par/sct}}^{\frac12} (s;2) \, Z_{\mathrm{hyp2/sct}}^{\frac12} (s;2) 
\end{equation}
with
\[ Z_{\mathrm{id}}^{\frac12} (s) := \bigl( \Gamma_2(s) \Gamma_2(s+1) \bigr)^{\zeta_K(-1)} ,
\quad Z_{\mathrm{ell}}^{\frac12} (s;2) :=  \prod_{j=1}^{N} \prod_{l=0}^{\nu_j-1} \Gamma \bigl( \tfrac{s+l}{\nu_j} \bigr)^{\frac{\nu_j-1-2l}{2 \nu_j}},  
\]
\[  Z_{\mathrm{par/sct}}^{\frac12} (s;2) :=  \varepsilon^{-s}, \quad  Z_{\mathrm{hyp2/sct}}^{\frac12} (s;2) := \zeta_{\varepsilon}(s). 
\]
\begin{equation} \label{zmhat}
\widehat{Z}_m(s) := Z_m(s)  \, Z_{\mathrm{id}}(s)  \, Z_{\mathrm{ell}} (s;m) 
                                       \, Z_{\mathrm{hyp2/sct}}(s;m)  \quad (m \ge 4)
\end{equation}
with
\[ Z_{\mathrm{id}}(s) := \bigl( \Gamma_2(s) \Gamma_2(s+1) \bigr)^{2 \zeta_K(-1)} ,
\quad Z_{\mathrm{ell}}(s;m) :=  \prod_{j=1}^{N} \prod_{l=0}^{\nu_j-1} \Gamma \bigl( \tfrac{s+l}{\nu_j} \bigr)^{\frac{\nu_j-1- \alpha_l(m,j)- \overline{\alpha_l}(m,j)}{\nu_j}},  
\]
\[  Z_{\mathrm{hyp2/sct}} (s;m) := \zeta_{\varepsilon}\Bigl(s+\frac{m}{2}-1\Bigr)  \zeta_{\varepsilon}\Bigl(s+\frac{m}{2}-2\Bigr)^{-1} . 
\]
Here, $\Gamma_2(s)$ is the double Gamma function (for definition, we refer to \cite{KK} 
or \cite[Definition 4.10, p. 751]{GP}), the natural numbers $\nu_1,\nu_2,\dots,\nu_N$ are the orders of the elliptic fixed points in $X_K$
and the integers $\alpha_l(m,j), \overline{\alpha_l}(m,j) \in \{ 0,1,\dots,\nu_j-1\}$ are defined in (\ref{eq:alpha}),
$\zeta_{K}(s)$ is the Dedekind zeta function of $K$, 
$\zeta_{\varepsilon}(s):=(1-\varepsilon^{-2s})^{-1}$ and $\varepsilon$ is the fundamental unit of $K$.
\end{definition}

Let $m \in 2 \N$.  
We recall that two Laplacians 
\begin{equation}
 \Delta_{0}^{(1)} := -y_1^2 \Bigl( \frac{\partial^2}{\partial x_1^2}
   +\frac{\partial^2}{\partial y_1^2} \Bigr), 
   \quad 
   \Delta_{m}^{(2)} := -y_2^2 \Bigl( \frac{\partial^2}{\partial x_2^2}
   +\frac{\partial^2}{\partial y_2^2} \Bigr) 
+ i  m \, y_2 \frac{\partial}{\partial x_2}
\end{equation}
are acting on $L^2_\text{dis} ( \Gamma_{K} \backslash \mathbb{H}^2 ; (0,m) )$, 
the space of Hilbert-Maass forms for $\Gamma_K$ with weight $(0,m)$. (See Definition \ref{def:HM}.)
We consider a certain subspace of $L^2_\text{dis} ( \Gamma_{K} \backslash \mathbb{H}^2 ; (0,m) )$
given by 
\begin{equation}
V_{m}^{(2)} = 
\Bigl\{  f (z_1,z_2) \in  
L^2_\text{dis} ( \Gamma_{K} \backslash \mathbb{H}^2 
; (0,m) )
\Bigl| \,  \Delta_{m}^{(2)} f  = 
\frac{m}{2} \bigl( 1-\frac{m}{2} \bigr) \, f  \Bigr\}.
\end{equation}

The set of eigenvalues of $\Delta_0^{(1)} \big|_{V_m^{(2)}}$ are enumerated as
\[  0 < \lambda_0(m) \le \lambda_1(m) \le \cdots \le \lambda_n(m) \le \cdots \]
Let $s$ be a fixed sufficiently large real number.
We consider the spectral zeta function by using these eigenvalues.
\begin{equation} 
\zeta_m(w, s)=\sum_{n=0}^{\infty} \frac{1}{\bigl( \lambda_n(m)+s(s-1) \bigr)^w}
\quad (\RE(w) \gg 0).
\end{equation}
We can show that $\zeta_m(w, s)$ is holomorphic at $w=0$. (See Proposition \ref{spectral-zeta}.)

Let us define the regularized determinants of the Laplacian $\Delta_0^{(1)} \big|_{V_m^{(2)}}$.
\begin{definition}[Determinants of restrictions of $\Delta_0^{(1)} $]
Let $m \in 2\N$. For $s \gg 0$, define
\begin{equation}
\mathrm{Det}\Bigl(  \Delta_{0}^{(1)} \big|_{V_{m}^{(2)}}  +s(s-1) \Bigr) 
:= \exp \Bigl(  - \frac{\partial}{\partial w} \Big|_{w=0}  \sum_{n=0}^{\infty} \frac{1}{\bigl( \lambda_n(m)+s(s-1) \bigr)^w} \Bigr). 
\end{equation}
\end{definition}
We see later that $\mathrm{Det}\Bigl(  \Delta_{0}^{(1)} \big|_{V_{m}^{(2)}}  +s(s-1) \Bigr)$
can be extended to an entire function of $s$.
(See Corollary \ref{cor1}.)

Our main theorem is as follows.
\begin{theorem}[Main Theorem] \label{mainth}
Let $\square_m :=  \Delta_{0}^{(1)} \big|_{V_{m}^{(2)}}$ for $m \in 2 \N$. 
We have the following determinant expressions of the completed Selberg type zeta functions.
\begin{enumerate}
\item $\displaystyle{\widehat{Z}_2^{\frac12}(s)=e^{(s-\frac{1}{2})^2 \zeta_K(-1) +C_2}  \, \frac{\mathrm{Det} \bigl( \square_2+s(s-1) \bigr)}{s(s-1)}}$.
\item $\displaystyle{\widehat{Z}_4(s) = e^{2(s-\frac{1}{2})^2 \zeta_K(-1) +C_4}  \,
\frac{ s(s-1) \cdot \mathrm{Det} \bigl( \square_4+s(s-1) \bigr)}{\mathrm{Det} \bigl( \square_2+s(s-1) \bigr)}}$.
\item For $m \ge 6$,  $\displaystyle{\widehat{Z}_m(s) = e^{2(s-\frac{1}{2})^2 \zeta_K(-1) +C_m}  
\frac{\mathrm{Det} \bigl( \square_m+s(s-1) \bigr)}{\mathrm{Det} \bigl( \square_{m-2}+s(s-1) \bigr)}}$.  
\end{enumerate}
Here, the constants $C_m$ are given by
\begin{align*}
   C_2 &= - \frac{1}{2} \log \varepsilon + \sum_{j=1}^{N}\frac{\nu_j^2-1}{12 \nu_j}\log \nu_j, \\
   C_m &=  \sum_{j=1}^{N} \frac{\nu_j^2-1 -12\alpha_0(m,j) \bigl\{ \nu_j - \alpha_0(m,j)  \bigr\} }{6 \nu_j}\log \nu_j \quad  (m \ge 4),
\end{align*}
the natural numbers $\nu_1,\nu_2,\dots,\nu_N$ are the orders of the elliptic fixed points in $X_K$
and the integers $\alpha_0(m,j) \in \{ 0,1,\dots,\nu_j-1\}$ are defined in (\ref{eq:alpha}).
\end{theorem}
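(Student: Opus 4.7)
The plan is to compare logarithmic $s$-derivatives of both sides, reducing the identity to checking that the two meromorphic functions have the same divisor and agree up to the elementary factor $e^{2(s-\frac12)^2\zeta_K(-1)+C_m}$ (or its square root when $m=2$), and then pinning down the constants by analyzing a distinguished value of $s$ or an asymptotic.

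First, using the Mellin representation $\zeta_m(w,s)=\Gamma(w)^{-1}\int_0^\infty t^{w-1}\sum_n e^{-(\lambda_n(m)+s(s-1))t}\,dt$, a standard computation gives
\begin{equation*}
\frac{1}{2s-1}\frac{d}{ds}\log \mathrm{Det}\bigl(\square_m+s(s-1)\bigr)=\sum_{n=0}^{\infty}\frac{1}{\lambda_n(m)+s(s-1)}.
\end{equation*}
This spectral sum is precisely the object computed by the Selberg trace formula for $\Gamma_K$ acting on $V_m^{(2)}$ with the test function $h(r)=((s-\tfrac12)^2+r^2)^{-1}$, where I write $\lambda_n(m)=\tfrac14+r_n(m)^2$. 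Splitting the geometric side into the identity, elliptic, parabolic/scattering, hyperbolic-hyperbolic (which vanishes because $(\gamma,\gamma')$ with both components hyperbolic does not contribute to $V_m^{(2)}$), and hyperbolic-elliptic conjugacy classes yields five terms: the hyperbolic-elliptic term is exactly $\frac{1}{2s-1}\frac{d}{ds}\log Z_m(s)$ by construction of $Z_m(s)$, while the remaining four terms are manifestly the logarithmic derivatives of $Z_{\mathrm{id}}$, $Z_{\mathrm{ell}}(s;m)$, $Z_{\mathrm{par/sct}}$, and $Z_{\mathrm{hyp2/sct}}(s;m)$ appearing in Definition \ref{zhat}, modulo an identity contribution of the form $(2s-1)\cdot 2\zeta_K(-1)$ that does not fit inside any of those factors.

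After multiplying through by $2s-1$ and integrating in $s$, this identifies the logarithmic derivatives of the two sides of each of (1)--(3) up to an additive polynomial in $s$ of degree at most $1$; exponentiation gives agreement up to the factor $e^{a(s-\frac12)^2+b}$ with $a=2\zeta_K(-1)$ (respectively $\zeta_K(-1)$ when $m=2$) fixed by the residue of the identity term. The ratio structure in (2)--(3) appears naturally because the recursion $V_m^{(2)}\to V_{m-2}^{(2)}$ via the Maass lowering operator pairs the spectral contributions: in the trace formula for weight $(0,m)$ the only ``new'' small eigenvalues that survive to $\widehat{Z}_m$ are those orthogonal to the image, and the $s(s-1)$ prefactor in (1) and numerator of (2) accounts for the constant function which lies in $V_2^{(2)}$ with $\lambda_0(2)=0$. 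The $m=2$ case acquires the square-root because the integer divisors coming from the even Euler characteristic of $X_K$ (Lemma \ref{EC}) make $\sqrt{Z_2(s)}$ well-defined, and correspondingly the identity contribution halves.

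Finally, the constants $C_m$ are isolated by evaluating both sides as $s\to\infty$ along the real axis. The Selberg zeta $Z_m(s)\to 1$ and the double Gamma asymptotics give explicit asymptotics for $Z_{\mathrm{id}}$ and $Z_{\mathrm{ell}}(s;m)$; on the determinant side the asymptotic of $\log\mathrm{Det}(\square_m+s(s-1))$ is read off from the small-$t$ expansion of the heat kernel on $V_m^{(2)}$, whose elliptic contributions produce the characteristic $\frac{\nu_j^2-1}{12\nu_j}\log\nu_j$ (and its $\alpha_0$-shifted analogue for $m\ge 4$) via the Rademacher--Hirzebruch--Zagier formulas for cotangent sums. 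Matching the constant terms of these asymptotic expansions delivers the stated $C_m$.

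The main obstacle will be step two: organising the trace formula for $V_m^{(2)}$ so that every geometric term of \cite{G1} is visible on the right-hand side after dividing by $2s-1$, and in particular showing that no spurious contribution remains. The elliptic bookkeeping, where the integers $\alpha_l(m,j),\overline{\alpha_l}(m,j)$ are summed via identities for finite cotangent sums to produce the exponents in $Z_{\mathrm{ell}}^{\frac12}(s;2)$ and $Z_{\mathrm{ell}}(s;m)$, is the most delicate piece; everything else is bookkeeping once that combinatorial identity is in hand.
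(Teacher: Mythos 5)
Your overall strategy is the same as the paper's: relate $\frac{1}{2s-1}\frac{d}{ds}\log\mathrm{Det}(\square_m+s(s-1))$ to the spectral resolvent sum, identify that sum with $\frac{1}{2s-1}\frac{d}{ds}\log\widehat{Z}_m(s)$ via the double differences of the Selberg trace formula, conclude equality up to $e^{P_m(s)}$ for a polynomial $P_m$, and then extract $P_m$ by matching $s\to\infty$ asymptotics of the zeta side (Stirling for $\Gamma_2$ and $\Gamma$) against the determinant side (small-$t$ heat trace). The final asymptotic step in particular is exactly what the paper does.

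There is, however, a genuine gap in your second step as written. The identity
\begin{equation*}
\frac{1}{2s-1}\frac{d}{ds}\log \mathrm{Det}\bigl(\square_m+s(s-1)\bigr)=\sum_{n=0}^{\infty}\frac{1}{\lambda_n(m)+s(s-1)}
\end{equation*}
cannot be taken at face value: by the Weyl law $N_m^{+}(T)\sim\frac{m-1}{2}\zeta_K(-1)\,T$, the right-hand side diverges, and the test function $h(r)=((s-\frac12)^2+r^2)^{-1}$ fails the admissibility condition $h_1(r)=O((1+|r|^2)^{-2-\delta})$ required in Propositions \ref{ddtrf2} and \ref{ddtrfm}, so you cannot feed it into the trace formula directly. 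The paper avoids this in two ways simultaneously: it uses the regularized resolvent test function $\frac{1}{r^2+(s-\frac12)^2}+\sum_{h=1}^{2}\frac{c_h(s)}{r^2+\beta_h^2}$ from \cite[Theorems 5.2 and 6.4]{G1}, whose auxiliary terms make both the spectral and geometric sides converge, and it works with $\bigl(-\frac{1}{2s-1}\frac{d}{ds}\bigr)^{k+1}\log\mathrm{Det}=-\sum_j k!\,(\lambda_j(m)+s(s-1))^{-k-1}$ for $k$ large, which is absolutely convergent and annihilates the $\beta_h$-terms and any polynomial ambiguity. A side effect is that your a priori bound of ``degree at most $1$'' on the polynomial ambiguity of the log-derivative is not justified by the argument itself: after inverting the $(k+1)$-fold operator one only knows the discrepancy is a polynomial of possibly high degree, and its actual form $2(s-\frac12)^2\zeta_K(-1)+C_m$ (including the quadratic coefficient, not just the constant) must be read off from the asymptotic comparison you describe in your last step. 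With the regularization inserted and the polynomial degree left open until the asymptotics are matched, your outline becomes the paper's proof.
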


We know the following Weyl's law: 
\[ N_m^{+}(T) := \#\{ j \, | \, \lambda_j(m)  \le T  \} \sim \tfrac{(m-1)}{2} \cdot \zeta_K(-1) \cdot T 
\quad (T \to \infty). \]
(See \cite[Theorem 6.11]{G1}.)
Therefore, we may say that
$Z_m(s)$ ($m \ge 4$) have ``more'' zeros than poles. 

We have several corollaries from Theorem \ref{mainth} by direct calculation.
\begin{corollary} \label{cor1}
Let $\square_m =  \Delta_{0}^{(1)} \big|_{V_{m}^{(2)}}$ for $m \in 2 \N$.
For $m \in 2\N$, we have
\begin{enumerate}
\item $\mathrm{Det}\bigl(\square_2+s(s-1)\bigr) = s(s-1) \, e^{-(s-\frac{1}{2})^2 \zeta_K(-1)-C_2} \, \widehat{Z}_2^{\frac12}(s)$. 
\item $\mathrm{Det}\bigl(\square_m+s(s-1)\bigr) = e^{ -(m-1)(s-\frac{1}{2})^2 \zeta_K(-1)  -(C_2+C_4+\cdots +C_m)} 
\, \widehat{Z}_2^{\frac12}(s) \, \widehat{Z}_4(s) \cdots \widehat{Z}_m(s)$ for $m \ge 4$.
\end{enumerate}
\end{corollary}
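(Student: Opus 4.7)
The plan is straightforward: both assertions follow from Theorem \ref{mainth} by purely algebraic rearrangement, with part (2) requiring a short induction on the even parameter $m$. For part (1), I would simply multiply both sides of the identity in Theorem \ref{mainth}(1) by $s(s-1)$ and divide by $e^{(s-\frac12)^2\zeta_K(-1)+C_2}$ to obtain the stated formula directly.

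For part (2), the argument proceeds by induction on $m\in 2\N$ with $m\ge 4$. In the base case $m=4$, I would solve Theorem \ref{mainth}(2) for $\mathrm{Det}(\square_4+s(s-1))$ and substitute the expression for $\mathrm{Det}(\square_2+s(s-1))$ furnished by part (1); the explicit factor $s(s-1)$ cancels, leaving the product $\widehat{Z}_2^{\frac12}(s)\,\widehat{Z}_4(s)$ with combined additive constant $-(C_2+C_4)$ and coefficient $-3=-(m-1)$ on $(s-\frac12)^2\zeta_K(-1)$. For the inductive step $m\to m+2$ with $m+2\ge 6$, Theorem \ref{mainth}(3) provides
\begin{equation*}
\mathrm{Det}(\square_{m+2}+s(s-1)) = \mathrm{Det}(\square_m+s(s-1))\, e^{-2(s-\frac12)^2\zeta_K(-1)-C_{m+2}}\, \widehat{Z}_{m+2}(s),
\end{equation*}
into which I would substitute the inductive hypothesis; the $\widehat{Z}$-factors accumulate into the claimed product $\widehat{Z}_2^{\frac12}(s)\widehat{Z}_4(s)\cdots\widehat{Z}_{m+2}(s)$, the constants telescope into $-(C_2+C_4+\cdots+C_{m+2})$, and the coefficient of $(s-\frac12)^2\zeta_K(-1)$ grows by $-2$ at each step.

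The only real bookkeeping is the total coefficient of $(s-\frac12)^2\zeta_K(-1)$: the base case contributes $-1$, and each induction step contributes an additional $-2$, so at level $m$ the total is $-1-2\cdot\frac{m-2}{2}=-(m-1)$, matching the statement. There is no genuine analytic obstacle here; all substantive content (meromorphic extension of the determinant, the three cases of Theorem \ref{mainth}, and the matching of divisors) has been absorbed into the Main Theorem, so this corollary reduces to a routine manipulation of identities between meromorphic functions of $s$ on $\C$.
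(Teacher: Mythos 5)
Your proposal is correct and matches the paper's intent exactly: the paper offers no written-out proof beyond the remark that the corollaries follow from Theorem \ref{mainth} ``by direct calculation,'' and your algebraic rearrangement of part (1), base case $m=4$ via Theorem \ref{mainth}(2), and telescoping induction via Theorem \ref{mainth}(3) is precisely that calculation, with the coefficient bookkeeping $-1-2\cdot\frac{m-2}{2}=-(m-1)$ checking out.
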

It follows from the above corollary that $\mathrm{Det}\bigl(\square_m+s(s-1)\bigr)$ $(m \in 2 \N)$
can be extended to entire functions of $s$.

By putting $s=1$ in the above, we have
\begin{corollary} \label{cor2}
For $m \in 2\N$, we have
\begin{enumerate}
\item $\mathrm{Det}(\square_2) = e^{-\frac{1}{4} \zeta_K(-1)-C_2} \, \mathrm{Res}_{s=1} \widehat{Z}_2^{\frac12}(s)$. 
\item $\mathrm{Det}(\square_4) = e^{-\frac{3}{4} \zeta_K(-1)-(C_2+C_4)} \,  
\mathrm{Res}_{s=1} \widehat{Z}_2^{\frac12}(s) \cdot \widehat{Z}_4'(1)$. 
\item  $\mathrm{Det}(\square_m) = e^{ -\frac{m-1}{4} \zeta_K(-1)  -(C_2+C_4+\cdots +C_m)} \,  
\mathrm{Res}_{s=1} \widehat{Z}_2^{\frac12}(s)  \cdot \widehat{Z}_4'(1) \cdot \widehat{Z}_6(1) \cdots \widehat{Z}_m(1)$ \\
for $m \ge 6$.
\end{enumerate}
\end{corollary}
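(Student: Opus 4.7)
The plan is to substitute $s=1$ directly into the identities of Corollary~\ref{cor1} and read off each assertion from the local behaviour of the completed Selberg type zeta functions at $s=1$. At $s=1$ the exponent $(s-\tfrac12)^2 \zeta_K(-1)$ becomes $\tfrac14 \zeta_K(-1)$, which already matches the exponential prefactors in the statement; the entire content is therefore in the limits of the zeta products.

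For part~(1), Corollary~\ref{cor1}(1) reads
\[
\mathrm{Det}\bigl(\square_2 + s(s-1)\bigr) \;=\; s(s-1)\, e^{-(s-\frac12)^2 \zeta_K(-1) - C_2} \, \widehat{Z}_2^{\frac12}(s).
\]
Since the eigenvalues $\lambda_n(2)$ are strictly positive, the left-hand side is finite and nonzero at $s=1$. The factor $s(s-1)$ has a simple zero there, so $\widehat{Z}_2^{\frac12}(s)$ is forced to have a compensating simple pole at $s=1$, and
\[
\lim_{s\to 1} s(s-1)\, \widehat{Z}_2^{\frac12}(s) \;=\; \mathrm{Res}_{s=1}\, \widehat{Z}_2^{\frac12}(s).
\]
Substituting $s=1$ then gives assertion~(1).

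For parts~(2) and~(3) I would substitute $s=1$ in Corollary~\ref{cor1}(2), yielding
\[
\mathrm{Det}(\square_m) \;=\; e^{-\frac{m-1}{4}\zeta_K(-1) - (C_2+\cdots+C_m)} \lim_{s\to 1} \widehat{Z}_2^{\frac12}(s)\, \widehat{Z}_4(s) \cdots \widehat{Z}_m(s).
\]
The left-hand side is again finite and nonzero, while $\widehat{Z}_2^{\frac12}(s)$ has a simple pole at $s=1$. Comparing Corollary~\ref{cor1} at $m=4$ with $m=2$ forces $\widehat{Z}_4(s)$ to have a simple zero at $s=1$, and induction on $m$ using the cases $m\ge 6$ of Corollary~\ref{cor1} forces each $\widehat{Z}_m(s)$ with $m\ge 6$ to be regular and nonvanishing at $s=1$. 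Expanding $\widehat{Z}_4(s) = (s-1)\widehat{Z}_4'(1) + O((s-1)^2)$ and combining with the residue of $\widehat{Z}_2^{\frac12}(s)$ yields
\[
\lim_{s\to 1} \widehat{Z}_2^{\frac12}(s)\, \widehat{Z}_4(s) \;=\; \mathrm{Res}_{s=1}\, \widehat{Z}_2^{\frac12}(s)\, \cdot\, \widehat{Z}_4'(1),
\]
and for $m \ge 6$ the remaining factors simply contribute $\widehat{Z}_6(1)\cdots \widehat{Z}_m(1)$, producing (2) and~(3).

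The hard part is not really hard: no computation beyond a first-order Taylor expansion is required. The only nontrivial point to keep in mind is the pole/zero pattern of the completed zeta functions at $s=1$, but this is forced by Corollary~\ref{cor1} itself together with the strict positivity of $\spec(\square_m)$, and it is also consistent with the trivial-factor structure recorded in Definition~\ref{zhat} and the analytic properties collected in \cite[Theorems 5.4, 6.6]{G1}. Thus Corollary~\ref{cor2} is essentially a bookkeeping specialization of Corollary~\ref{cor1}.
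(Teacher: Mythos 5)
Your proposal is correct and follows essentially the same route as the paper, whose entire proof of this corollary is the phrase ``By putting $s=1$ in the above'': one substitutes $s=1$ into Corollary \ref{cor1} and uses the strict positivity of the spectrum to pin down the simple pole of $\widehat{Z}_2^{\frac12}$ and the simple zero of $\widehat{Z}_4$ at $s=1$. Your write-up merely makes this pole/zero bookkeeping explicit, which the paper leaves to the reader.
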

Here, $\square_m =  \Delta_{0}^{(1)} \big|_{V_{m}^{(2)}}$ for $m \in 2 \N$.

\section{Preliminaries} 
We fix the notation for the Hilbert modular group of a real quadratic field
in this section.
We also recall the definition of Hilbert-Maass forms for the Hilbert modular group
and review ``Differences of the Selberg trace formula'', 
introduced in \cite{G1},
which play a crucial role in this article.

\subsection{Hilbert modular group of a real quadratic field} 
Let
$K/\Q$ be a real quadratic field with class number one and
$\mathcal{O}_K$ be the ring of integers of $K$. Put $D$ be the
discriminant of $K$ and $\varepsilon > 1 $ be the fundamental
unit of $K$. 
We denote the generator of $\mathrm{Gal}(K/\Q)$
by $\sigma$ and put $a' := \sigma(a)$ 
for $a \in K$. 
We also put
$\gamma' =  \Bigl( 
\begin{array}{cc}
a' & b' \\
c' & d'
\end{array} \Bigr)$ for 
$\gamma =  \Bigl( 
\begin{array}{cc}
a & b \\
c & d
\end{array} \Bigr) \in \mathrm{PSL}(2, \mathcal{O}_K)$.

Let $G$ be $\mathrm{PSL}(2,\R)^2 
= \Bigl( \mathrm{SL}(2,\R) / \{ \pm I \} \Bigr)^2$ and 
$\bH^2$ be the direct product of two copies of the upper half plane 
$\bH := \{ z \in \C \, | \,  \IM(z)>0  \}$.
The group $G$ acts on $\bH^2$ by 
\[ g.z = (g_1,g_2).(z_1,z_2) = 
\biggl( \frac{a_1z_1+b_1}{c_1z_1+d_1},\frac{a_2z_2+b_2}{c_2z_2+d_2} \biggr) \in \bH^2
\]
for $g=(g_1,g_2)=
( \Bigl( 
\begin{array}{cc}
a_1 & b_1 \\
c_1 & d_1
\end{array} \Bigr),
 \Bigl( 
\begin{array}{cc}
a_2 & b_2 \\
c_2 & d_2
\end{array} \Bigr)
)$ and $z=(z_1,z_2) \in \bH^2$.

A discrete subgroup $\Gamma \subset G$ is called irreducible if 
it is not commensurable with any direct product $\Gamma_1 \times \Gamma_2$ 
of two discrete subgroups of $\mathrm{PSL}(2,\R)$. 
We have classification of the elements of irreducible $\Gamma$.
\begin{proposition}[Classification of the elements]
Let $\Gamma$ be an irreducible discrete subgroup of $G$. 
Then any element of $\Gamma$ is one of the followings. 
\begin{enumerate}
\item $\gamma=(I,I)$ is the identity 
\item $\gamma=(\gamma_1,\gamma_2)$ is hyperbolic \, $\Lra \, 
|\tr(\gamma_1)| > 2$ and $|\tr(\gamma_2)| > 2$  
\item $\gamma=(\gamma_1,\gamma_2)$ is elliptic \, $\Lra \, 
|\tr(\gamma_1)| < 2$ and $|\tr(\gamma_2)| < 2$
\item $\gamma=(\gamma_1,\gamma_2)$ is hyperbolic-elliptic \, $\Lra \, 
|\tr(\gamma_1)| > 2$ and $|\tr(\gamma_2)| < 2$  
\item $\gamma=(\gamma_1,\gamma_2)$ is elliptic-hyperbolic \, $\Lra \, 
|\tr(\gamma_1)| < 2$ and $|\tr(\gamma_2)| > 2$  
\item $\gamma=(\gamma_1,\gamma_2)$ is parabolic \, $\Lra \, 
|\tr(\gamma_1)| = |\tr(\gamma_2)| = 2$ 
\end{enumerate}
\end{proposition}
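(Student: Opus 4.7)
The proposition enumerates six types and they collectively cover every combination of Jordan types on $(\gamma_1,\gamma_2) \in G$ except the ``mixed parabolic'' combinations where exactly one component has $|\tr| = 2$ while the other does not. The trichotomy $|\tr(\gamma_i)| \gtreqless 2$ on each factor a priori allows these mixed types for general discrete subgroups of $G$, so the real content of the statement is that irreducibility of $\Gamma$ rules them out.

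My plan is to argue by contradiction: suppose $\gamma = (\gamma_1,\gamma_2) \in \Gamma$ has $\gamma_1$ parabolic and $\gamma_2$ hyperbolic or elliptic. After conjugating in the first factor I may put $\gamma_1 = \bigl(\begin{smallmatrix} 1 & 1 \\ 0 & 1 \end{smallmatrix}\bigr)$, fixing $\infty$. Irreducibility should produce some $\delta = (\delta_1,\delta_2) \in \Gamma$ whose first component does not fix $\infty$; otherwise $\Gamma$ would be contained in the parabolic subgroup $B \times \mathrm{PSL}(2,\R)$ stabilizing the flag at $\infty$ on the first factor, and a standard commensurability argument would exhibit $\Gamma$ as commensurable with a direct product. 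One then combines powers $\gamma^n$ with $\delta$ via commutators or longer words, aiming to construct a sequence of distinct elements of $\Gamma$ confined to a compact subset of $G$, violating discreteness. In the elliptic case one exploits Kronecker density of $\{\gamma_2^n\}$ in a rotation subgroup to select $n_k$ with $\gamma_2^{n_k} \to I$; in the hyperbolic case one uses the attracting/repelling dynamics of $\gamma_2$ on $\bH$; both feed into a Shimizu-type lower bound on the first-factor contribution.

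I expect the main technical obstacle to be precisely this: cancelling the linear or quadratic growth in $n$ of the matrix entries of $\gamma_1^n \delta_1 \gamma_1^{-n}$ by a careful choice of word in $\gamma$ and $\delta$, so that the resulting elements of $\Gamma$ actually remain bounded. This is a higher-rank analogue of Shimizu's lemma and depends delicately on the irreducibility hypothesis. Fortunately, for the specific group $\Gamma = \Gamma_K$ of interest in this paper, a one-line Galois argument bypasses the analytic machinery entirely: if $\gamma = \bigl(\begin{smallmatrix} a & b \\ c & d \end{smallmatrix}\bigr) \in \mathrm{PSL}(2,\mathcal O_K)$ has $|\tr(\gamma_1)| = |a+d| = 2$, then $a+d = \pm 2$ lies in $\Z \subset \mathcal O_K$, so $\tr(\gamma_2) = a'+d' = \sigma(a+d) = \pm 2$ and $\gamma_2 = \gamma'$ is also parabolic or the identity; the reverse implication is symmetric. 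Thus for $\Gamma_K$ the mixed parabolic combinations are excluded on purely algebraic grounds, and the six listed cases follow immediately by applying the trace trichotomy on each factor.
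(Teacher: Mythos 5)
The paper offers no proof of this proposition at all: it is quoted as the standard classification of elements of an irreducible cofinite discrete subgroup of $\mathrm{PSL}(2,\R)^2$ (going back to Shimizu and the Hilbert modular surface literature), so there is no argument of the paper's to compare yours against line by line. Your closing Galois argument for the group that actually matters here, $\Gamma=\Gamma_K$, is correct and complete: $|\tr(\gamma_1)|=2$ forces $a+d=\pm 2$ as a real number, hence $a+d=\pm2\in\Z\subset\mathcal{O}_K$, hence $\tr(\gamma_2)=\sigma(a+d)=\pm2$, and symmetrically; combined with the trace trichotomy on each factor and the trivial remark that $\gamma_1=I$ iff $\gamma_2=\gamma_1'=I$, this rules out every mixed-parabolic combination for $\Gamma_K$ and yields exactly the six listed cases. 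Since the paper only ever applies the proposition to $\Gamma_K$, that part of your proposal would serve.

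As a proof of the proposition \emph{as stated} --- for an arbitrary irreducible discrete subgroup in the sense the paper defines (not commensurable with a direct product) --- your first argument has a genuine gap, and in fact cannot be completed without an extra hypothesis. The reduction you propose (``otherwise a standard commensurability argument would exhibit $\Gamma$ as commensurable with a direct product'') is false: the infinite cyclic group generated by a single pair $(\gamma_1,\gamma_2)$ with $\gamma_1$ parabolic and $\gamma_2$ hyperbolic is discrete; it is not commensurable with any product $\Gamma_1\times\Gamma_2$ of discrete subgroups, since such a product would contain a copy of $\Z^2$ up to finite index while the group is virtually $\Z$; every element of it has first component fixing $\infty$; and it visibly contains a parabolic-hyperbolic element. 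So the exclusion of mixed types genuinely requires cofiniteness (which $\Gamma_K$ has, and which the paper records just before this proposition), and the Shimizu-type estimate you defer as a ``technical obstacle'' is precisely where that hypothesis must enter --- it is not a detail one can wave at. The honest repair is either to restrict the statement to $\Gamma_K$ and use your Galois argument, or to add the lattice hypothesis and carry out (or cite) Shimizu's lemma in full.
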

Note that there are no other types in $\Gamma$. (parabolic-elliptic etc.)

Let us consider the Hilbert modular group of the real quadratic field $K$
with class number one,
\[ \Gamma_{K} := 
\Bigl\{ (\gamma,\gamma') =  \Bigl( 
\Bigl( 
\begin{array}{cc}
a & b \\
c & d
\end{array} \Bigr), 
\, 
\Bigl( 
\begin{array}{cc}
a' & b' \\
c' & d'
\end{array} \Bigr)
\Bigr)
\Big| \, 
\Bigl( 
\begin{array}{cc}
a & b \\
c & d
\end{array} \Bigr) \in 
\mathrm{PSL}(2,\mathcal{O}_K)
\Big\}. 
\]

It is known that $\Gamma_K$ is an irreducible discrete subgroup 
of $G=\mathrm{PSL}(2,\R)^2$ with the only one cusp 
$\infty := (\infty, \infty)$,
i.e. $\Gamma_K$-inequivalent parabolic fixed point.
$X_K = \Gamma_K \backslash \bH^2$ 
is called the Hilbert modular surface.

We have a lemma about the Euler characteristic of the Hilbert modular surface $X_K$.
\begin{lemma} \label{EC}
Let 
$E(X_K)$ be the Euler characteristic of the Hilbert modular surface $X_K = \Gamma_K \backslash \bH^2$.
Then we have $E(X_{K}) \in 2\N$.
\end{lemma}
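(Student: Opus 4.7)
The plan is to identify $E(X_K)$ with the topological Euler characteristic of the smooth compactification $\tilde X_K$ of $X_K=\Gamma_K\backslash\bH^2$, obtained by Hirzebruch--Jung resolution of the unique cusp $\infty$ and of the elliptic fixed points, and then to verify that this integer is even.

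First I would write $E(\tilde X_K)$ via Hirzebruch's formula (see e.g.\ van der Geer, \emph{Hilbert Modular Surfaces}, Ch.~IV) as a sum
\[
E(\tilde X_K) \;=\; 2\zeta_K(-1) \;+\; \ell \;+\; \sum_{j=1}^{N}\rho_j,
\]
where $\ell$ denotes the length of the cycle of rational curves resolving $\infty$ and the $\rho_j$ are the explicit Hirzebruch--Jung corrections at the elliptic fixed points of orders $\nu_j$. By Siegel's theorem, $2\zeta_K(-1)$ is a rational number whose denominator is controlled by the $\nu_j$, and the total is a positive integer.

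Second, to extract parity, I would exploit the involution $\tilde\tau$ of $\tilde X_K$ obtained by composing the swap $(z_1,z_2)\mapsto(z_2,z_1)$ with the Galois action of $\mathrm{Gal}(K/\Q)$. The standard transfer identity
\[
E(\tilde X_K) \;=\; 2\,E(\tilde X_K/\tilde\tau) \;-\; E(\mathrm{Fix}(\tilde\tau))
\]
reduces the claim to showing that the fixed locus of $\tilde\tau$ has even Euler characteristic. That fixed locus is the image of the Galois-stable diagonal $\{z_1=z_2\}$, a Hirzebruch--Zagier-type curve, together with finitely many isolated points on the exceptional divisors; its Euler characteristic can be enumerated explicitly.

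The main obstacle is the parity analysis in the second step: neither $2\zeta_K(-1)$ nor the individual $\rho_j$ are even separately, so one has to pin down the structural reason why their combination is. The involution route above is conceptually clean but requires careful bookkeeping on the fixed divisors; an alternative is a direct verification using Siegel's explicit closed form for $\zeta_K(-1)$ together with the Dedekind reciprocity formulas governing the $\rho_j$, which yields a term-by-term congruence mod $2$.
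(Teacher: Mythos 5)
Your proposal has two genuine gaps. First, the quantity in the lemma is the Euler number of the \emph{open} quotient $X_K=\Gamma_K\backslash\bH^2$, which by Hirzebruch--Zagier equals $2\zeta_K(-1)+\sum_{j=1}^{N}\frac{\nu_j-1}{\nu_j}$; it is not the Euler number of the smooth compactification $\tilde X_K$. Passing to $\tilde X_K$ adds the Euler characteristics of the exceptional configurations (the cycle of $\ell$ rational curves over the cusp contributes $\ell$, and each Hirzebruch--Jung chain over an elliptic point contributes the length of that chain), and these corrections need not be even. So even a complete parity analysis of $E(\tilde X_K)$ would not by itself settle the parity of $E(X_K)$ without additional bookkeeping that you have not supplied.

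Second, and more seriously, the crux of your argument --- that $E(\mathrm{Fix}(\tilde\tau))$ is even --- is only asserted, not proved, and you yourself flag it as the main obstacle. This is not a routine check: the fixed locus of the swap involution is a union of Hirzebruch--Zagier-type curves together with isolated points, and a single such component (for instance the image of the diagonal, a copy of the level-one modular curve, whose open surface has Euler characteristic $1$) can already contribute an odd amount, so the claim would rest on a delicate cancellation among all components. As written, the proposal reduces one unproven parity statement to another. For comparison, the paper's own proof sidesteps all of this by quoting the Hirzebruch--Zagier identity $E(X_K)=2\bigl(\chi(Y_K)+\chi(Y_K^{-})\bigr)$, where $\chi(Y_K)$ and $\chi(Y_K^{-})$ are the arithmetic genera of the resolved compactifications of $\Gamma_K\backslash\bH^2$ and $\Gamma_K\backslash(\bH\times\bH^{-})$; since arithmetic genera are integers, evenness is immediate.
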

\begin{proof}
By noting the formula
$E(X_K) = 2 \zeta_{K}(-1)  +  \sum_{j=1}^{N} \frac{\nu_j-1}{\nu_j}$
(see (2), (4) on \cite[pp.46-47]{HZ}), 
$E(X_K)$ is a positive integer. 
Let $Y_K$ and $Y_K^{-}$ be the non-singular algebraic surfaces
resolved singularities, in the canonical minimal way, 
of compactifications of $\Gamma_K \backslash \bH^2$ and
$\Gamma_K \backslash (\bH \times \bH^{-})$ respectively. Here $\bH^{-}$ is the lower half plane.
Let $\chi(Y_K)$ and $\chi(Y_K^{-})$ be the arithmetic genera of $Y_K$ and $Y_K^{-}$ respectively.
By the formulas (12) and (14) on \cite[p.48]{HZ}, we have
\[ E(X_K) = 2 \bigl( \chi(Y_K)+\chi(Y_K^{-}) \bigr). \]
We complete the proof.
\end{proof}

We fix the notation for elliptic conjugacy classes in $\Gamma_K$.
Let $R_1,R_2,\cdots,R_{N}$ be a complete system of representatives
of the $\Gamma_K$-conjugacy classes of primitive elliptic elements of
$\Gamma_K$. $\nu_1,\nu_2, \cdots, \nu_N$ $(\nu_j \in \N, \, \nu_j \ge 2)$
denote the orders of $R_1,R_2,\cdots,R_{N}$.
We may assume that $R_j$ is conjugate 
in $\mathrm{PSL}(2,\R)^2$ to
\[ R_j \sim \Bigl( 
\Bigl( 
\begin{array}{cc}
\cos \frac{\pi}{\nu_j} & - \sin \frac{\pi}{\nu_j} \\
\sin \frac{\pi}{\nu_j} & \cos \frac{\pi}{\nu_j}
\end{array} \Bigr), 
\, 
\Bigl( 
\begin{array}{cc}
\cos \frac{t_j \pi}{\nu_j} & - \sin \frac{t_j \pi}{\nu_j} \\
\sin \frac{t_j \pi}{\nu_j} & \cos \frac{t_j \pi}{\nu_j}
\end{array} \Bigr)\Bigr), 
\quad (t_j,\nu_j)=1. 
\]
For even natural number $m \ge 2$ and 
$l \in \{0,1,\cdots, \nu_j-1\}$, we define 
$\alpha_l(m,j), \, \overline{\alpha_l}(m,j) \in \{0,1, \cdots, \nu_j -1 \}$
by
\begin{equation} \label{eq:alpha}
\begin{split}
& l + \frac{t_j (m-2)}{2} \equiv \alpha_l(m,j) \pmod{\nu_j}, \\
& l - \frac{t_j (m-2)}{2} \equiv \overline{\alpha_l}(m,j) 
\pmod{\nu_j}. \\
\end{split}
\end{equation}

We divide hyperbolic conjugacy classes of $\Gamma_K$ into two subclasses according to their types.
\begin{definition}[Types of hyperbolic elements] \label{d:type}
For a hyperbolic element $\gamma$, we define that
\begin{enumerate}
\item $\gamma$ is type 1 hyperbolic $\Lra$ whose all fixed points are 
not fixed by parabolic elements.
\item $\gamma$ is type 2 hyperbolic $\Lra$ not type 1 hyperbolic.
\end{enumerate}  
\end{definition}

We denote by $\Gamma_{\mathrm{H1}}$, 
$\Gamma_{\mathrm{E}}$, 
$\Gamma_{\mathrm{HE}}$,
$\Gamma_{\mathrm{EH}}$ and
$\Gamma_{\mathrm{H2}}$,  
type 1 hyperbolic $\Gamma_K$-conjugacy classes,
elliptic $\Gamma_K$-conjugacy classes,
hyperbolic-elliptic $\Gamma_K$-conjugacy classes, 
elliptic-hyperbolic $\Gamma_K$-conjugacy classes and
type 2 hyperbolic $\Gamma_K$-conjugacy classes
of $\Gamma_K$ respectively.

\subsection{The space of Hilbert-Maass forms}

Fix the weight $(m_1,m_2) \in (2 \Z)^2$. 
Set the automorphic factor $j_{\gamma}(z_j) = \frac{cz_j+d}{|cz_j+d|}$ for 
$\gamma \in \mathrm{PSL}(2,\R)$ $(j=1,2)$. 

Let $\Delta_{m_j}^{(j)} := -y_j^2(\frac{\partial^2}{\partial x_j^2}
   +\frac{\partial^2}{\partial y_j^2}) 
+ i  m_j \, y_j \frac{\partial}{\partial x_j} \quad (j=1,2)$
be the Laplacians of weight $m_j$ for the variable $z_j$.

Let us define the $L^2$-space of 
automorphic forms of weight $(m_1,m_2)$ with respect to the 
Hilbert modular group $\Gamma_{K}$.
\begin{definition}[$L^2$-space of automorphic forms of weight $(m_1,m_2)$]
\begin{eqnarray*}
 &&  L^2(\Gamma_K \backslash \bH^2 \, ; \, (m_1,m_2)) :=  
\Bigl\{ f \colon \bH^2 \to \C, \, C^{\infty} \, \Big| \, \\
&&  (i) \, 
f((\gamma,\gamma')(z_1,z_2)) 
= j_{\gamma}(z_1)^{m_1} j_{\gamma'}(z_2)^{m_2}f(z_1,z_2) 
\quad \forall (\gamma,\gamma') \in \Gamma_K \\
&&  (ii) \, 
\exists (\lambda^{(1)},\lambda^{(2)}) \in \R^2 \quad 
\Delta_{m_1}^{(1)} \, f(z_1,z_2) = \lambda^{(1)} f(z_1,z_2), \quad 
\Delta_{m_2}^{(2)} \, f(z_1,z_2) = \lambda^{(2)} f(z_1,z_2) \\
&& (iii) \, ||f ||^2 = \int_{\Gamma_K \backslash \bH^2} f(z) 
\overline{f(z)} \, 
d \mu (z) < \infty
\Bigr\}.
\end{eqnarray*}
Here, $d \mu(z) = \frac{dx_1dy_1}{y_1^2} \frac{dx_2dy_2}{y_2^2}$
for $z=(z_1,z_2) \in \bH^2$.
\end{definition}

Then, it is known that
\begin{proposition}
Let $L^2_{\text{dis}}(\Gamma_K \backslash \bH^2 \, ; \, (m_1,m_2))$
be the subspace of the discrete spectrum of the Laplacians
and 
$L^2_{\text{con}}(\Gamma_K \backslash \bH^2 \, ; \, (m_1,m_2))$
be the subspace of the continuous spectrum.
Then, 
we have a direct sum decomposition :  
\[  L^2(\Gamma_K \backslash \bH^2 \, ; \, (m_1,m_2)) 
 = L^2_{\text{dis}}(\Gamma_K \backslash \bH^2 \, ; \, (m_1,m_2))
   \oplus L^2_{\text{con}}(\Gamma_K \backslash \bH^2 \, ; \, (m_1,m_2))  \]
and there is an orthonormal basis $\{ \phi_j \}_{j=0}^{\infty}$
of $L^2_{\text{dis}}(\Gamma_K \backslash \bH^2 \, ; \, (m_1,m_2))$.
\end{proposition}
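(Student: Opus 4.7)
The plan is to follow the standard Selberg spectral decomposition strategy, adapted to the weight $(m_1,m_2)$ setting on the Hilbert modular surface. First I would fix a fundamental domain $F \subset \bH^2$ for $\Gamma_K$ and regard $L^2(\Gamma_K \backslash \bH^2;(m_1,m_2))$ as a closed subspace of $L^2(F, d\mu)$ via the automorphy condition (i). A direct integration by parts on $F$ (using the explicit form of $\Delta_{m_j}^{(j)}$ and the fact that the boundary terms cancel by $\Gamma_K$-invariance, except possibly near the unique cusp $(\infty,\infty)$) shows that each $\Delta_{m_j}^{(j)}$ is symmetric on $C_c^\infty$ and the two commute; Friedrichs extension then provides commuting self-adjoint operators on $L^2(\Gamma_K \backslash \bH^2;(m_1,m_2))$. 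This makes the joint spectral decomposition meaningful.

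Next I would isolate the cuspidal subspace
\begin{equation*}
L^2_{\mathrm{cusp}} := \Bigl\{ f \in L^2(\Gamma_K \backslash \bH^2;(m_1,m_2)) \,\Big|\, \int_{0}^{1}\!\!\int_{0}^{1} f(x_1+iy_1,x_2+iy_2)\,dx_1\,dx_2 = 0 \text{ a.e.} \Bigr\},
\end{equation*}
invariant under both Laplacians. By the usual argument (expanding the complement of a large neighborhood of the cusp in Fourier series and using exponential decay of non-zero Fourier coefficients of joint eigenfunctions), cuspidal eigenfunctions decay rapidly toward $\infty$; a Rellich-type compactness argument then shows that the resolvent of $\Delta_{m_1}^{(1)}+\Delta_{m_2}^{(2)}$ on $L^2_{\mathrm{cusp}}$ is compact, yielding a purely discrete joint spectrum with an orthonormal basis of joint eigenfunctions and finite multiplicities.

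To handle the orthogonal complement of $L^2_{\mathrm{cusp}}$, I would construct the Eisenstein series attached to the unique cusp $(\infty,\infty)$ with weights $(m_1,m_2)$, namely
\begin{equation*}
E(z_1,z_2,s_1,s_2;(m_1,m_2)) := \sum_{\gamma \in \Gamma_{K,\infty}\backslash \Gamma_K} j_\gamma(z_1)^{-m_1} j_{\gamma'}(z_2)^{-m_2}\, \IM(\gamma z_1)^{s_1}\,\IM(\gamma' z_2)^{s_2},
\end{equation*}
convergent for $\RE(s_1),\RE(s_2)$ large, and establish its meromorphic continuation and functional equation by Selberg's method (truncation, Maass–Selberg relations). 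The continuous spectrum is then described by integrals of these Eisenstein series along the critical lines, while the residues (together with the constants) contribute to a finite-dimensional residual part; adding this residual part to $L^2_{\mathrm{cusp}}$ produces $L^2_{\mathrm{dis}}$, and the Eisenstein integrals span $L^2_{\mathrm{con}}$, giving the desired orthogonal decomposition and orthonormal basis.

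The main obstacle is the meromorphic continuation of the weight-$(m_1,m_2)$ Eisenstein series for $\Gamma_K$ and the associated Maass–Selberg relations, which require careful truncation adapted to the two-dimensional spectral parameter $(s_1,s_2)$; once these are in hand, the decomposition follows along classical lines. Since $\Gamma_K$ has only one cusp this is easier than the general case, but one still needs to verify that the zeroth Fourier coefficient of an $L^2$ automorphic form can be analyzed via a single Eisenstein family, as in Efrat's treatment.
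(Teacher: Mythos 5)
The paper offers no proof of this proposition at all: it is introduced with ``it is known that'' and the analytic work behind it is deferred to the literature (the spectral resolution used in \cite{G1} to set up the trace formula). So your proposal is not competing with an argument in the paper; it is an attempt to supply the standard Selberg--Langlands decomposition from scratch. The overall architecture you describe --- self-adjoint commuting extensions of the two Laplacians, the cuspidal subspace with compact resolvent and hence discrete joint spectrum, Eisenstein series furnishing the continuous part, residues giving the residual part --- is indeed the correct and classical route.

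There is, however, a concrete gap in the central construction. The Eisenstein series you write down, $\sum_{\gamma \in \Gamma_{K,\infty}\backslash \Gamma_K} j_\gamma(z_1)^{-m_1} j_{\gamma'}(z_2)^{-m_2}\, \IM(\gamma z_1)^{s_1}\,\IM(\gamma' z_2)^{s_2}$ with two free parameters $(s_1,s_2)$, is not well defined. For the Hilbert modular group the stabilizer $\Gamma_{K,\infty}$ of the cusp $(\infty,\infty)$ contains not only translations by $\mathcal{O}_K$ but also the images of the units, e.g.\ $\bigl(\mathrm{diag}(\varepsilon,\varepsilon^{-1}),\mathrm{diag}(\varepsilon',\varepsilon'^{-1})\bigr)$, which scale $(y_1,y_2)\mapsto(\varepsilon^{2}y_1,\varepsilon^{-2}y_2)$; the summand $y_1^{s_1}y_2^{s_2}$ is invariant under this only when $\varepsilon^{2(s_1-s_2)}=1$, i.e.\ when $s_1-s_2$ lies in $\frac{\pi i}{\log\varepsilon}\Z$. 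The correct objects are the Eisenstein series twisted by the characters of the unit group (Hecke's Gr\"ossencharacter construction), so the continuous spectrum is a countable union of \emph{one}-parameter families indexed by these characters, not a single two-parameter family; correspondingly the Maass--Selberg relations and the truncation are carried out in one spectral variable per channel. This also affects your final paragraph: the ``two-dimensional spectral parameter'' you propose to truncate against does not occur. The gap is repairable by standard means, but as written the key analytic ingredient of your argument does not exist.
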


\begin{definition}[Hilbert Maass forms of weight $(m_1,m_2)$]
\label{def:HM}
Let $(m_1,m_2) \in (2\Z)^2$.
We call   
\[ L^2_{\text{dis}}(\Gamma_K \backslash \bH^2 \, ; \, (m_1,m_2)) \]
the space of Hilbert Maass forms for $\Gamma_K$ of weight $(m_1,m_2)$.
\end{definition}

Let $\{ \phi_{j} \}_{j=0}^{\infty}$ be an orthonormal basis
of $L^2_{\text{dis}}(\Gamma_{K} \backslash \bH^2 
\, ; \, (m_1,m_2))$ and 
$(\lambda_j^{(1)},\lambda_j^{(2)}) \in \R^2$ such that
\[ \Delta_{m_1}^{(1)} \phi_j = \lambda_j^{(1)} \phi_j \quad \mbox{and} \quad
   \Delta_{m_2}^{(2)} \phi_j = \lambda_j^{(2)} \phi_j. \]

We write $\lambda^{(l)}_j = \tfrac{1}{4}+(r^{(l)}_j)^2$ and 
$r_j^{(i)}$ are defined by
\begin{equation} \label{eq:rj}
r_j^{(l)} := 
\begin{cases}
  \sqrt{\lambda_j^{(l)}- \frac{1}{4}}     \quad \: \, \mbox{if } \lambda_j^{(l)} \ge \frac{1}{4},  \\
 i \sqrt{\frac{1}{4} - \lambda_j^{(l)}}   \quad \mbox{if } \lambda_j^{(l)} < \frac{1}{4}, 
\end{cases}
\end{equation}
for $l=1,2$.

\subsection{Double differences of the Selberg trace formula}
Let $m$ be an even integer. We studied and derived the full Selberg trace formula for 
$L^2(\Gamma_K \backslash \bH^2 \, ; \, (0,m))$ in \cite{G1}.
(See \cite[Theorem 2.22]{G1}.) 
Let $h(r_1,r_2)$ be an even ``test function'' which satisfy certain analytic conditions. 
Roughly speaking, \cite[Theorem 2.22]{G1} is as follows.
\[ \sum_{j=0}^{\infty} h(r_j^{(1)},r_j^{(2)}) 
= \mathbf{I}(h)+\mathbf{II_a}(h)+\mathbf{II_b}(h)+\mathbf{III}(h).
\]
Here, the right hand side is a sum of distributions of $h$ 
contributed from several conjugacy classes of $\Gamma_K$ 
and Eisenstein series for $\Gamma_K$.
Assuming that the test function $h(r_1,r_2)$ is a product of 
$h_1(r_1)$ and $h_2(r_2)$, we derived ``differences of STF''(\cite[Theorem 4.1]{G1})
and ``double differences of STF'' (\cite[Theorem 4.4]{G1}). We explain for this.

Let us consider the subspace of $L^2_\text{dis} ( \Gamma_{K} \backslash \mathbb{H}^2 
; (0,m) )$ given by
\[ V_{m}^{(2)} = 
\Bigl\{  f \in  
L^2_\text{dis} ( \Gamma_{K} \backslash \mathbb{H}^2 
; (0,m) )
\Bigl| \,  \Delta_{m}^{(2)} f  = 
\frac{m}{2} \bigl( 1-\frac{m}{2} \bigr) \, f  \Bigr\}.
\]

Let $h_1(r)$ be an even function, analytic in $\IM(r)<\delta$ for some $\delta>0$,  
\[ h_1(r)=O((1+|r|^2)^{-2-\delta}) \]
for some $\delta>0$ in this domain.
Let $g_1(u):= \frac{1}{2 \pi}\int_{-\infty}^{\infty} h_1(r) e^{-i r u} \, dr$. 
Then we have
\begin{proposition}[Double differences of STF 
for $L^2 \bigl( \Gamma_{K} \backslash \bH^2 \, ; \, (0,2) \bigr)$]
\label{ddtrf2}
Let $m = 2$. We have 
\begin{align*}
& \sum_{j=0}^{\infty} h_1 \Bigl( \rho_j(2) \Bigr)  
- h_1 \Bigl( \frac{i}{2} \Bigr) \\
 & \quad =  \:
\frac{\mathrm{vol}(\Gamma_K \backslash \bH^2)}{16 \pi^2}
\int_{-\infty}^{\infty}
  r h_1(r) \tanh (\pi r)  \, dr  \\
& \quad \quad \quad - \sum_{R(\theta_1,\theta_2) \in 
\Gamma_{\mathrm{E}}} 
\frac{i e^{ - i \theta_1}}{8 \nu_{R} \sin \theta_1}
\int_{-\infty}^{\infty} g_1(u) \, e^{-u/2} \biggl[  \frac{e^u - e^{2 i \theta_1}}{\cosh u - \cos 2 \theta_1} \biggr] 
du \\
& \quad \quad \quad
 - \frac{1}{2} \sum_{(\gamma,\omega) \in \Gamma_{\mathrm{HE}}} 
\frac{\log N(\gamma_0) \, g_1(\log N(\gamma))   }{N(\gamma)^{1/2}-N(\gamma)^{-1/2}} 
- \log \varepsilon \, g_1(0)
\, - 2 \log \varepsilon \, 
\sum_{k=1}^{\infty} g_1(2k \log \varepsilon) \, \varepsilon^{-k}. 
\end{align*}
Here,  $\{ \lambda_j(2) = 1/4+ \rho_j(2)^2 \}_{j=0}^{\infty}$  
is the set of eigenvalues of the Laplacian $\Delta_{0}^{(1)}$ acting 
on $V_2^{(2)}$.
\end{proposition}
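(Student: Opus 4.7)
The plan is to apply the full Selberg trace formula for $L^2(\Gamma_K \backslash \bH^2;(0,2))$ (Theorem 2.22 of \cite{G1}) with a product test function $h(r_1,r_2)=h_1(r_1) h_2(r_2)$, and then invoke the \emph{double difference} mechanism of \cite[Theorem 4.4]{G1} specialized to $m=2$ to isolate the spectral contribution of $V_2^{(2)}$. Recall that $V_2^{(2)}$ is the eigenspace of $\Delta_2^{(2)}$ with minimal eigenvalue $\tfrac{m}{2}(1-\tfrac{m}{2})\big|_{m=2}=0$, corresponding to spectral parameter $r_2=i/2$. Hence the "second variable" in the trace formula must be forced onto this single value.

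The mechanism works as follows. Starting from the STF for weight $(0,2)$, one forms a first difference in the second-variable weight (passing from weight $(0,m)$ to weight $(0,m+2)$ and subtracting) to kill contributions from non-holomorphic $\Delta_m^{(2)}$-eigenstates; iterating gives the double difference, and at $m=2$ exactly the eigenvalues $\rho_j(2)$ of $\Delta_0^{(1)}\big|_{V_2^{(2)}}$ survive on the spectral side. A residual $-h_1(i/2)$ is produced because the ground state at $r_2=i/2$ is picked up with an extra sign from the shift procedure (equivalently, from a boundary residue of the scattering determinant at $s=1$). On the geometric side: the identity term becomes the displayed $\tfrac{\text{vol}}{16\pi^2}\int r h_1(r)\tanh(\pi r)\,dr$; the elliptic contribution contracts (using Lemma on elliptic kernels from \cite{G1}) to the stated integral against $g_1(u)e^{-u/2}$; the hyperbolic-elliptic contribution survives verbatim as the sum over $(\gamma,\omega)\in\Gamma_{\mathrm{HE}}$. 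Type~1 hyperbolic, elliptic-hyperbolic, and the cuspidal Eisenstein integrals all vanish under the double difference, since the corresponding $r_2$-integrands collapse to zero after the iterated subtraction.

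The main obstacle is the bookkeeping of the parabolic, scattering, and type~2 hyperbolic contributions: these involve the scattering matrix of $\Gamma_K$ and Eisenstein integrals in $r_2$, and must combine to produce precisely the clean expressions $-\log\varepsilon\,g_1(0)-2\log\varepsilon\sum_{k\ge 1}\varepsilon^{-k} g_1(2k\log\varepsilon)$, with no leftover continuous-spectrum integral. The delicate point is identifying the residue at $r_2=i/2$ of the relevant digamma / scattering factors; this residue both produces the $-h_1(i/2)$ term on the left and eliminates potential divergences on the right. Once these cancellations are verified, the identity follows by specializing \cite[Theorem 4.4]{G1} to $m=2$ and collecting the surviving distributions; the analytic conditions $h_1(r)=O((1+|r|^2)^{-2-\delta})$ and evenness of $h_1$ suffice to justify all contour shifts and Fourier inversions involved.
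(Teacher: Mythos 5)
The paper does not reprove this identity at all: its ``proof'' is the single citation to \cite[Corollary~6.3]{G1}, so your task was really to reconstruct that corollary. Your sketch correctly identifies the general mechanism (product test functions in the full STF of \cite[Theorem~2.22]{G1}, iterated differences in the second-variable weight to pin the $r_2$-spectrum at $i/2$), but as written it has two genuine gaps.

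First, the hardest part is explicitly deferred rather than done. You write that the parabolic, scattering and type~2 hyperbolic terms ``must combine to produce precisely'' $-\log\varepsilon\,g_1(0)-2\log\varepsilon\sum_{k\ge1}\varepsilon^{-k}g_1(2k\log\varepsilon)$ and that ``once these cancellations are verified, the identity follows.'' That verification \emph{is} the proof; asserting that it works is not an argument. Likewise the provenance of the $-h_1(i/2)$ term is only guessed at (``an extra sign from the shift procedure, equivalently a boundary residue of the scattering determinant'') without committing to either.

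Second, and more seriously, your route of ``specializing \cite[Theorem~4.4]{G1} to $m=2$'' cannot be right, because the $m=2$ identity is not the $m=2$ instance of the $m\ge4$ formula (Proposition~\ref{ddtrfm}). Compare the two statements: at $m=2$ the identity term carries $\tfrac{1}{16\pi^2}$ rather than $\tfrac{1}{8\pi^2}$, the elliptic and hyperbolic--elliptic sums carry an extra factor $\tfrac12$, there is a parabolic/scattering term $-\log\varepsilon\,g_1(0)$ with no counterpart at $m\ge4$, and the type~2 hyperbolic weight is $\varepsilon^{-k}$, whereas naively setting $m=2$ in $\varepsilon^{-k(m-1)}-\varepsilon^{-k(m-3)}$ would give the divergent $\varepsilon^{-k}-\varepsilon^{k}$. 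These halvings are exactly why the paper works with $\sqrt{Z_2(s)}$ rather than $Z_2(s)$, and they force a separate derivation (this is what \cite[Corollary~6.3]{G1} supplies, and why the paper cites it instead of Theorem~4.4 for this case). Your proposal does not account for any of this, so the specialization step you propose would produce the wrong constants and an ill-defined type~2 hyperbolic term.
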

\begin{proof}
See \cite[Corollary 6.3]{G1}.
\end{proof}

\begin{proposition}[Double differences of STF 
for $L^2 \bigl( \Gamma_{K} \backslash \bH^2 \, ; \, (0,m) \bigr)$]
\label{ddtrfm}
Let $m \in 2 \N$ and $m \ge 4$. We have 
\begin{align*}
& \sum_{j=0}^{\infty} h_1 \Bigl( \rho_j(m) \Bigr)  
   - \sum_{j=0}^{\infty} h_1 \Bigl( \rho_j(m-2) \Bigr)  
   + \delta_{m,4} \, h_1 \Bigl( \frac{i}{2} \Bigr) \\
 & \quad =  \:
\frac{\mathrm{vol}(\Gamma_K \backslash \bH^2)}{8 \pi^2}
\int_{-\infty}^{\infty}
  r h_1(r) \tanh (\pi r)  \, dr \\
& \quad \quad \quad - \sum_{R(\theta_1,\theta_2) \in 
\Gamma_{\mathrm{E}}} 
\frac{i e^{ - i \theta_1} \, e^{i(m-2) \theta_2}}{4 \nu_{R} \sin \theta_1}
\int_{-\infty}^{\infty} g_1(u) \, e^{-u/2} \biggl[  \frac{e^u - e^{2 i \theta_1}}{\cosh u - \cos 2 \theta_1} \biggr] 
du \\
& \quad \quad \quad
 - \sum_{(\gamma,\omega) \in \Gamma_{\mathrm{HE}}} 
\frac{\log N(\gamma_0)}{N(\gamma)^{1/2}-N(\gamma)^{-1/2}} \, g_1(\log N(\gamma)) 
\, e^{i(m-2)\omega}
\\
& \quad \quad \quad - 2 \log \varepsilon \, 
\sum_{k=1}^{\infty} g_1(2k \log \varepsilon) \, 
 \Bigl( \varepsilon^{-k(m-1)}  - \varepsilon^{-k(m-3)} \Bigr). 
\end{align*}
\end{proposition}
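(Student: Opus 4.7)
The plan is to derive the formula by the same mechanism used for Proposition~\ref{ddtrf2}, which is itself the specialization to $m=2$ of the general double-difference identity \cite[Theorem 4.4]{G1}. First, I would apply the full Selberg trace formula \cite[Theorem 2.22]{G1} for $L^2(\Gamma_K\backslash\bH^2;(0,m))$ to a product test function $h(r_1,r_2) = h_1(r_1)\, h_2(r_2)$, writing
\[
 \sum_{j} h_1(r_j^{(1)})\, h_2(r_j^{(2)}) \;=\; \mathbf{I}(h)+\mathbf{II_a}(h)+\mathbf{II_b}(h)+\mathbf{III}(h),
\]
and I would apply the same STF for weight $(0,m-2)$. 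Subtracting the two identities produces the \emph{first} difference (w.r.t.\ the weight), which is \cite[Theorem 4.1]{G1}. On the geometric side, all $m$-independent pieces cancel: the bulk of the identity contribution, the purely parabolic term $\log\varepsilon\, g_1(0)$, and the continuous-spectrum/scattering terms without $m$-dependence all drop out, while each remaining piece acquires an explicit difference in $m$.

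Next I would perform the \emph{second} difference, localizing the $r_2$-variable onto the exceptional eigenvalue $\tfrac{m}{2}(1-\tfrac{m}{2})$ of $\Delta_{m}^{(2)}$ by the standard device of \cite[Theorem 4.4]{G1}: one chooses $h_2$ (via its Selberg transform $g_2$) so that after the weight-difference the $r_2$-summand collapses to $1$ exactly on eigenvalues with $r_j^{(2)}=i(m-1)/2$, i.e.\ on $V_m^{(2)}$, and vanishes otherwise. This turns the spectral side into $\sum_{n\ge 0} h_1(\rho_n(m)) - \sum_{n\ge 0}h_1(\rho_n(m-2))$. The Kronecker-delta correction $\delta_{m,4}\, h_1(i/2)$ arises in the $m=4$ case because $V_2^{(2)}$ contains the constant function with eigenvalue $\lambda_0(2)=0$ (hence $\rho_0(2) = i/2$): this contribution is isolated in Proposition~\ref{ddtrf2} as a separate term, so matching formats when cascading the first difference from $m=4$ down to $m=2$ requires reinstating exactly this value.

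On the geometric side, I would read off each surviving term by simple $m$-evaluations. The identity factor $\vol(\Gamma_K\backslash\bH^2)/(8\pi^2)$ is twice the one in Proposition~\ref{ddtrf2} because the relevant bulk coefficient in \cite[Theorem 2.22]{G1} depends linearly on $m-1$; the elliptic term of \cite[Theorem 2.22]{G1} carries a factor $e^{i(m-2)\theta_2}$ coming from the rotation eigenvalue in the second component, which passes through the $r_2$-localization unchanged; the hyperbolic-elliptic term similarly picks up $e^{i(m-2)\omega}$; and the type~2~hyperbolic/scattering term combines weights $m$ and $m-2$ into $\varepsilon^{-k(m-1)} - \varepsilon^{-k(m-3)}$. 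The parabolic $\log\varepsilon\, g_1(0)$ term of Proposition~\ref{ddtrf2} is absent here precisely because that piece is $m$-independent and cancels in the first difference.

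The main obstacle is the justification of the $r_2$-localization: choosing a valid $h_2$ that satisfies the analytic hypotheses of \cite[Theorem 2.22]{G1} and extracting only the $V_m^{(2)}$ eigenvalues requires a limiting argument (approximate identities concentrated at $r = i(m-1)/2$) together with uniform convergence estimates on the geometric side, especially for the integrals over the elliptic conjugacy classes whose integrand is singular as $u \to 0$. A secondary technical step is the careful bookkeeping of the $\delta_{m,4}$ correction, since the second-difference machinery must match the format of Proposition~\ref{ddtrf2} in which this small-eigenvalue term is written on the spectral side rather than absorbed into a geometric contribution.
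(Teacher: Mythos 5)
Your proposal is correct and follows essentially the same route as the paper, whose proof is simply the citation of \cite[Theorem 4.4 and (5.3)]{G1}: the result is obtained by applying the trace formula to a product test function, differencing in the weight to isolate $V_m^{(2)}$, and reading off the $m$-dependence of each geometric term, exactly the mechanism you describe and the one the paper outlines in Section~2.3. The only quibble is your parenthetical $\lambda_0(2)=0$, which conflicts with the paper's convention that the $\lambda_j(m)$ enumerate only the positive eigenvalues (the zero eigenvalue of the constant function is precisely why $h_1(i/2)$ is carried as a separate term and reappears as the $\delta_{m,4}$ correction), but this does not affect the argument.
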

Here,  $\{ \lambda_j(q) = 1/4+ \rho_j(q)^2 \}_{j=0}^{\infty}$  
is the set of eigenvalues of the Laplacian $\Delta_{0}^{(1)}$ acting 
on $V_q^{(2)}$ $(q=m,m-2)$.
\begin{proof}
See \cite[Theorem 4.4]{G1} and \cite[(5.3)]{G1}.

\end{proof}

\section{Asymptotic behavior of the completed Selberg zeta functions} 

We have to know the asymptotic behavior of the completed Selberg zeta functions
$\widehat{Z}_{2}^{\frac12}(s)$ and $\widehat{Z}_{m}(s)$ $(m \ge 4)$ when $s \to \infty$, 
to prove Main Theorem (Theorem \ref{mainth}). We calculate their asymptotic behavior 
in this section.

\begin{lemma}[Stirling's formula for $\Gamma_2(z)$] \label{gamma2}
Let $\Gamma_2(z) :=\exp \bigl( \frac{\partial}{\partial s} \big|_{s=0} \sum_{m,n=0}^{\infty} (m+n+z)^{-s} \bigr)$ be the
double Gamma function. Then we have
\begin{equation}
\log \Gamma_2(z+1)  = \frac{3}{4} z^2 - \Bigl( \frac{z^2}{2}-\frac{1}{12} \Bigr) \log z +o(1) \quad (z \to \infty). 
\end{equation}
\end{lemma}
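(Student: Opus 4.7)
The plan is to express $\log \Gamma_2(z+1)$ as a linear combination of derivatives of the Hurwitz zeta function at $s=0$ and $s=-1$, and then extract the asymptotic as $z\to\infty$ from Stirling's formula for $\log\Gamma$ together with the Euler--Maclaurin expansion of the Hurwitz zeta function.

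\textbf{Step 1 (reduction to Hurwitz zeta).} Grouping the double series by diagonals $k=m+n$ gives $\sum_{m,n\ge 0}(m+n+z)^{-s}=\sum_{k\ge 0}(k+1)(k+z)^{-s}$. Writing $k+1=(k+z)-(z-1)$ yields the identity
\[
\sum_{m,n\ge 0}(m+n+z)^{-s}=\zeta(s-1,z)-(z-1)\,\zeta(s,z),
\]
and then combining with $\zeta(s,z+1)=\zeta(s,z)-z^{-s}$ produces the cleaner form
\[
\sum_{m,n\ge 0}(m+n+z+1)^{-s}=\zeta(s-1,z)-z\,\zeta(s,z).
\]
Differentiating at $s=0$ gives the key relation
\[
\log \Gamma_2(z+1)=\zeta'(-1,z)-z\,\zeta'(0,z).
\]

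\textbf{Step 2 (asymptotics of the two pieces).} Lerch's formula $\zeta'(0,z)=\log\Gamma(z)-\tfrac{1}{2}\log(2\pi)$ together with the classical Stirling expansion for $\log\Gamma(z)$ gives, after multiplication by $z$,
\[
z\,\zeta'(0,z)= z^{2}\log z- z^{2}-\tfrac{z}{2}\log z+\tfrac{1}{12}+o(1).
\]
For $\zeta'(-1,z)$ I differentiate the Euler--Maclaurin expansion
\[
\zeta(s,z)=\frac{z^{1-s}}{s-1}+\frac{z^{-s}}{2}+\sum_{k=1}^{N}\frac{B_{2k}}{(2k)!}\,\frac{\Gamma(s+2k-1)}{\Gamma(s)}\,z^{1-s-2k}+R_{N}(s,z)
\]
in $s$ at $s=-1$. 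The Pochhammer-type factor $\Gamma(s+2k-1)/\Gamma(s)=s(s+1)\cdots(s+2k-2)$ vanishes at $s=-1$ for every $k\ge 2$, so the $k\ge 2$ terms contribute only $O(z^{2-2k})=o(1)$ after differentiation, and the remainder behaves likewise. Differentiating the three explicit leading summands then collects to
\[
\zeta'(-1,z)=\tfrac{z^{2}}{2}\log z-\tfrac{z^{2}}{4}-\tfrac{z}{2}\log z+\tfrac{\log z}{12}+\tfrac{1}{12}+o(1).
\]

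\textbf{Step 3 (assembly and the hard part).} Subtracting the two expansions, the $z^{2}\log z$, $\tfrac{z}{2}\log z$ terms and the constant $\tfrac{1}{12}$ all cancel, leaving
\[
\log \Gamma_2(z+1)=-\tfrac{z^{2}}{2}\log z+\tfrac{3z^{2}}{4}+\tfrac{\log z}{12}+o(1)=\tfrac{3}{4}z^{2}-\Bigl(\tfrac{z^{2}}{2}-\tfrac{1}{12}\Bigr)\log z+o(1),
\]
which is precisely the asserted formula. The one technically delicate point is legitimizing the termwise $s$-differentiation of the Euler--Maclaurin expansion at $s=-1$, in particular controlling $\partial_{s}R_{N}(s,z)$ uniformly for large real $z$. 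The cleanest workaround is the Hermite/Binet integral representation of $\zeta(s,z)$, which is entire in $s$ and whose $s$-derivative can be bounded directly; alternatively, one can invoke the classical identity $\zeta'(-1,z)=z\log\Gamma(z)-\log G(z+1)+\zeta'(-1)$ together with Barnes' known asymptotic for the $G$-function, which reduces the whole matter to Stirling's formula.
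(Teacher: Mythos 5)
Your proof is correct, but it takes a genuinely different route from the paper. The paper simply quotes two external facts: Onodera's relation $\Gamma_2(z)=e^{\zeta'(-1)}(2\pi)^{(z-1)/2}G(z)^{-1}$ and Barnes' classical asymptotic for $\log G(z+1)$, and combines them; the $\frac{z}{2}\log(2\pi)$ and $\zeta'(-1)$ terms cancel and the lemma drops out in two lines. You instead derive the asymptotic from first principles: the diagonal regrouping $\sum_{m,n\ge 0}(m+n+z+1)^{-s}=\zeta(s-1,z)-z\,\zeta(s,z)$ is correct (I checked the shift identity), giving $\log\Gamma_2(z+1)=\zeta'(-1,z)-z\,\zeta'(0,z)$, and your two expansions — Lerch plus Stirling for $z\,\zeta'(0,z)$, and the Euler--Maclaurin expansion differentiated at $s=-1$ for $\zeta'(-1,z)$, where the factor $\Gamma(s+2k-1)/\Gamma(s)$ kills all $k\ge 2$ terms to $o(1)$ — both check out and assemble to exactly the stated formula. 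What your approach buys is self-containedness and transparency about where each term (in particular the $\tfrac{1}{12}\log z$) comes from; what it costs is the technical burden you correctly flag, namely justifying termwise $s$-differentiation of the Euler--Maclaurin remainder uniformly in $z$, which does require the Hermite/Binet integral representation or an equivalent uniform bound. Your proposed fallback, $\zeta'(-1,z)=z\log\Gamma(z)-\log G(z+1)+\zeta'(-1)$ plus Barnes' asymptotic, is essentially the paper's argument in disguise, so either way the result is secure.
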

\begin{proof}
Let $G(z)$ be the  Barnes $G$-function defined by (See \cite[p.268]{B1}.)
\[ G(z+1) = (2 \pi)^{\frac{z}{2}} e^{-\frac{z+z^2(1+\gamma)}{2}}
\prod_{k=1}^{\infty} \biggl\{ \Bigl(1+\frac{z}{k} \Bigr)^k e^{ -z+\frac{z^2}{k^2}} \biggr\}. \]
Here, $\gamma=-\Gamma'(1)$ is the Euler constant. 
By using the relation (See \cite[Proposition 4.1]{On}.)
\[ \Gamma_2(z) = e^{\zeta'(-1)}(2 \pi) ^{\frac{z-1}{2}} G(z)^{-1}, \]
and the asymptotic formula (See \cite[p.269]{B1}.)
\[ \log G(z+1) =   \frac{z}{2} \log(2 \pi) +\zeta'(-1) -\frac{3}{4} z^2 + \Bigl( \frac{z^2}{2}-\frac{1}{12} \Bigr) \log z +o(1) \quad (z \to \infty), \]
we have the desired formula.
\end{proof}

\begin{lemma}[Asymptotics of the identity factors] \label{z-id}
We have
\begin{equation} \label{z-id-half}
\log Z_{\mathrm{id}}^{\frac12} (s)  = \zeta_{K}(-1) \biggl\{ \frac{3}{2} s^2 - s -\Bigl( s^2-s+\frac{1}{3} \Bigr) \log s \biggr\} +o(1) \quad (s \to \infty),
\end{equation}
\begin{equation} \label{z-id-m}
\log Z_{\mathrm{id}} (s)  = 2 \zeta_{K}(-1) \biggl\{ \frac{3}{2} s^2 - s -\Bigl( s^2-s+\frac{1}{3} \Bigr) \log s \biggr\} +o(1) \quad (s \to \infty). 
\end{equation}
\end{lemma}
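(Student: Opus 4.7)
By the definitions of the identity factors,
\[
\log Z_{\mathrm{id}}^{\frac12}(s) = \zeta_K(-1)\bigl(\log \Gamma_2(s) + \log \Gamma_2(s+1)\bigr), \qquad
\log Z_{\mathrm{id}}(s) = 2\zeta_K(-1)\bigl(\log \Gamma_2(s) + \log \Gamma_2(s+1)\bigr),
\]
so it suffices to establish
\[
\log \Gamma_2(s) + \log \Gamma_2(s+1) = \tfrac{3}{2}s^2 - s - \Bigl(s^2 - s + \tfrac{1}{3}\Bigr)\log s + o(1) \quad (s\to\infty),
\]
and then multiply by $\zeta_K(-1)$ or $2\zeta_K(-1)$. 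The first term is handled by applying Lemma \ref{gamma2} directly with $z = s$, giving
\[
\log \Gamma_2(s+1) = \tfrac{3}{4}s^2 - \Bigl(\tfrac{s^2}{2} - \tfrac{1}{12}\Bigr)\log s + o(1).
\]

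The second term requires a shift: I apply Lemma \ref{gamma2} with $z = s-1$, producing
\[
\log \Gamma_2(s) = \tfrac{3}{4}(s-1)^2 - \Bigl(\tfrac{(s-1)^2}{2} - \tfrac{1}{12}\Bigr)\log(s-1) + o(1).
\]
I then expand $(s-1)^2 = s^2 - 2s + 1$ and use the Taylor series $\log(s-1) = \log s - s^{-1} + O(s^{-2})$ to re-express everything in powers of $s$ and $\log s$. The polynomial-in-$s$ prefactor of the $\log(s-1)$ term becomes $\frac{s^2}{2} - s + \frac{5}{12}$, and when multiplied by $-s^{-1}$ it contributes $-\frac{s}{2} + 1 + o(1)$ plus a $\log s$ piece of $-(\tfrac{s^2}{2}-s+\tfrac{5}{12})\log s$. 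Combining with the $\tfrac{3}{4}(s-1)^2 = \tfrac{3}{4}s^2 - \tfrac{3}{2}s + \tfrac{3}{4}$ expansion, the constant terms and the linear-in-$s$ terms from the two sources combine to yield
\[
\log \Gamma_2(s) = \tfrac{3}{4}s^2 - s - \Bigl(\tfrac{s^2}{2} - s + \tfrac{5}{12}\Bigr)\log s + o(1).
\]

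Adding the two asymptotics, the $\log s$ coefficient is $\tfrac{s^2}{2} - s + \tfrac{5}{12} + \tfrac{s^2}{2} - \tfrac{1}{12} = s^2 - s + \tfrac{1}{3}$, and the polynomial part is $\tfrac{3}{4}s^2 + \tfrac{3}{4}s^2 - s = \tfrac{3}{2}s^2 - s$, proving the claim above. Multiplying by $\zeta_K(-1)$ gives \eqref{z-id-half}, and by $2\zeta_K(-1)$ gives \eqref{z-id-m}. The only non-obvious point is the bookkeeping of the $O(1)$ corrections from $\log(s-1)$ against the linear term in the expansion of $(s-1)^2$; as shown, the constants $+\tfrac{3}{4}$ and $-\tfrac{3}{4}$ from these two sources cancel exactly, so no stray $O(1)$ constant appears in the final asymptotic.
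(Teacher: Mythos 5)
Your argument is correct and follows essentially the same route as the paper, whose proof simply cites Lemma \ref{gamma2} together with $\log Z_{\mathrm{id}}^{\frac12}(s)=\zeta_K(-1)\bigl(\log\Gamma_2(s)+\log\Gamma_2(s+1)\bigr)$ and $\log Z_{\mathrm{id}}(s)=2\log Z_{\mathrm{id}}^{\frac12}(s)$; your intermediate formula $\log\Gamma_2(s)=\tfrac34 s^2-s-\bigl(\tfrac{s^2}{2}-s+\tfrac{5}{12}\bigr)\log s+o(1)$ and the final sum are right. One small bookkeeping point: the expansion $\log(s-1)=\log s-s^{-1}+O(s^{-2})$ is one order too coarse to determine the constant term, because the $O(s^{-2})$ error multiplied by the prefactor $\tfrac{s^2}{2}-s+\tfrac{5}{12}$ is only $O(1)$; you need the next term $-\tfrac{1}{2}s^{-2}$, whose contribution $+\tfrac14$ is exactly what turns the $-1$ coming from $\bigl(\tfrac{s^2}{2}-s+\tfrac{5}{12}\bigr)\cdot s^{-1}$ into the $-\tfrac34$ that cancels the $+\tfrac34$ from $\tfrac34(s-1)^2$, as you assert.
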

\begin{proof}
By Definition \ref{zhat},
\[ \log Z_{\mathrm{id}}^{\frac12} (s)  = \zeta_{K}(-1) \Bigl( \log \Gamma_2(s) +\log\Gamma_2(s+1) \Bigr) \]
and Lemma \ref{gamma2}, we have the desired (\ref{z-id-half}). We see that the relation 
$\log Z_{\mathrm{id}}(s) = 2\log Z_{\mathrm{id}}^{\frac12} (s)$ implies (\ref{z-id-m}). It completes the proof.
\end{proof}

\begin{lemma}[Asymptotics of the elliptic factors] \label{z-ell}
We have
\begin{equation} \label{z-ell-half}
\log Z_{\mathrm{ell}}^{\frac12} (s;2)   =  - \sum_{j=1}^{N} \frac{\nu_j^2-1}{12 \nu_j} \log \frac{s}{\nu_j} +o(1) \quad (s \to \infty),
\end{equation}
\begin{equation}  \label{z-ell-m}
\log Z_{\mathrm{ell}} (s;m)   
=  - \sum_{j=1}^{N} \frac{\nu_j^2-1-12 \alpha_0(m,j) \bigl\{ \nu_j-\alpha_0(m,j) \bigr\}  }{6 \nu_j} \log \frac{s}{\nu_j} +o(1) \quad (s \to \infty)
\end{equation}
for $m \in 2 \N$ and $m \ge 4$.
Here $\alpha_0(m,j) \in \{ 0,1,\dots,\nu_j-1\}$ are defined in (\ref{eq:alpha}).
\end{lemma}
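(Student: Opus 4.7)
The plan is to apply Stirling's asymptotic
$\log\Gamma(w) = (w-\tfrac12)\log w - w + \tfrac12\log(2\pi) + o(1)$
(as $w\to\infty$) to each factor $\Gamma((s+l)/\nu_j)$ appearing in the definitions of $Z_{\mathrm{ell}}^{1/2}(s;2)$ and $Z_{\mathrm{ell}}(s;m)$, and then sum with the prescribed exponents. Write
$c_l^{(2)} := (\nu_j-1-2l)/(2\nu_j)$ in the half case and
$c_l^{(m)} := (\nu_j-1-\alpha_l(m,j)-\overline{\alpha_l}(m,j))/\nu_j$ in the full case,
so that $\log Z_{\mathrm{ell}}^{1/2}(s;2) = \sum_j \sum_l c_l^{(2)} \log\Gamma((s+l)/\nu_j)$, and analogously for $\log Z_{\mathrm{ell}}(s;m)$.

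First I would expand $\log((s+l)/\nu_j) = \log(s/\nu_j) + O(s^{-1})$, so Stirling yields uniformly in $l$
\[
\log\Gamma\bigl((s+l)/\nu_j\bigr) = \tfrac{s+l}{\nu_j}\log\tfrac{s}{\nu_j} - \tfrac{s+l}{\nu_j} - \tfrac12\log\tfrac{s}{\nu_j} + \tfrac12\log(2\pi) + o(1).
\]
The crucial arithmetic observation is $\sum_{l=0}^{\nu_j-1} c_l = 0$ in both cases: in the half case this is the symmetric cancellation of $\nu_j-1-2l$, and in the full case it follows from the fact that $l\mapsto\alpha_l(m,j)$ and $l\mapsto\overline{\alpha_l}(m,j)$ are permutations of $\{0,\dots,\nu_j-1\}$, so $\sum_l \alpha_l = \sum_l \overline{\alpha_l} = \nu_j(\nu_j-1)/2$. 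This cancellation eliminates the $s\log s$, $s$, $\log s$, and pure constant contributions, leaving
\[
\sum_l c_l\,\log\Gamma\bigl((s+l)/\nu_j\bigr) = \frac{1}{\nu_j}\Bigl(\sum_{l=0}^{\nu_j-1} l\,c_l\Bigr)\log\frac{s}{\nu_j} + o(1).
\]

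It then remains to evaluate $\sum_l l c_l$ in each case. For the half version, a direct computation with $\sum l = \nu_j(\nu_j-1)/2$ and $\sum l^2 = (\nu_j-1)\nu_j(2\nu_j-1)/6$ yields $\sum_l l c_l^{(2)} = -(\nu_j^2-1)/12$, which proves (\ref{z-ell-half}) after summing over $j$. For the full case, the definition (\ref{eq:alpha}) gives $\alpha_l \equiv l + \alpha_0(m,j) \pmod{\nu_j}$ and $\overline{\alpha_l} \equiv l - \alpha_0(m,j) \pmod{\nu_j}$ with representatives in $\{0,\dots,\nu_j-1\}$. I would split each of $\sum_l l\,\alpha_l$ and $\sum_l l\,\overline{\alpha_l}$ into two sub-sums according to whether the modular reduction is active (i.e., whether $l+\alpha_0 \ge \nu_j$, respectively $l-\alpha_0 < 0$), express each as an unreduced sum plus a correction of the form $\nu_j\cdot(\text{partial sum of }l)$, and then combine.

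The main obstacle is precisely this bookkeeping: the two modular corrections are of the form $\nu_j\cdot \sum_{l=\nu_j-\alpha_0}^{\nu_j-1}l$ and $\nu_j\cdot \sum_{l=0}^{\alpha_0-1}l$, producing terms in $\alpha_0^2$ and $\alpha_0\nu_j$ that must be carefully gathered. After the $\alpha_0^2$ and $\alpha_0\nu_j$ pieces collapse (using $\overline{\alpha_0} = \nu_j-\alpha_0$ when $\alpha_0\ne 0$, and trivially when $\alpha_0=0$) one obtains an expression in $\alpha_0(\nu_j-\alpha_0)$, which, after combining with the $(\nu_j-1)\sum l$ contribution and dividing by $\nu_j$, yields the claimed coefficient $-(\nu_j^2-1-12\,\alpha_0(m,j)\{\nu_j-\alpha_0(m,j)\})/(6\nu_j)$. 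Summing over $j=1,\dots,N$ completes the proof of (\ref{z-ell-m}).
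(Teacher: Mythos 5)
Your proof follows the paper's argument essentially verbatim: Stirling's formula applied to each $\Gamma\bigl(\tfrac{s+l}{\nu_j}\bigr)$, the key cancellation $\sum_{l}c_l=0$ coming from the fact that $l\mapsto\alpha_l(m,j)$ and $l\mapsto\overline{\alpha_l}(m,j)$ permute $\{0,\dots,\nu_j-1\}$, reduction to $\tfrac{1}{\nu_j}\bigl(\sum_l l\,c_l\bigr)\log\tfrac{s}{\nu_j}$, and the evaluation of $\sum_l l\,\alpha_l$ by splitting according to whether the reduction mod $\nu_j$ wraps around, together with $\alpha_0(\alpha_0-\nu_j)=\overline{\alpha_0}(\overline{\alpha_0}-\nu_j)$ --- all exactly as in the paper (your only deviation is computing the $m=2$ case directly instead of specializing the general formula at $\alpha_0=0$). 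One small caution: your displayed Stirling expansion omits the cross term $\tfrac{s+l}{\nu_j}\cdot\tfrac{l}{s}=\tfrac{l}{\nu_j}+o(1)$ coming from $\log\tfrac{s+l}{\nu_j}=\log\tfrac{s}{\nu_j}+\tfrac{l}{s}+O(s^{-2})$; read literally that display would shift the result by the nonzero constant $-\tfrac{1}{\nu_j}\sum_l l\,c_l$, although the reduction formula you then state (and hence your final coefficients) is the correct one.
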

\begin{proof}
We use Stirling's formula of $\Gamma(z)$. (See \cite[p.12]{L}.) 
\[ \log \Gamma(z) = \Bigl( z-\frac{1}{2} \Bigr)  \log z   -z  +\frac{1}{2} \log (2 \pi) +o(1)  \quad (z \to \infty).  \]
By Definition \ref{zhat},
\[ \log Z_{\mathrm{ell}}(s;m)
= \sum_{j=1}^{N} \sum_{l=0}^{\nu_j-1} \frac{\nu_j-1- \alpha_{l}(m,j)  - \overline{\alpha_{l}}(m,j) }{ \nu_j} \log \Gamma \Bigl( \frac{s+l}{\nu_j} \Bigr).
\]
We see that $\{ \alpha_l(m,j) \mid 0 \le l \le \nu_j-1 \}=\{ \overline \alpha_l(m,j) \mid 0 \le l \le \nu_j-1 \} =\{  0,1,2,\dots,\nu_j-1 \}$
for each $j$. 
Thus we have $\sum_{l=0}^{\nu_j-1} \bigl( \nu_j-1- \alpha_{l}(m,j)- \overline{\alpha_{l}}(m,j) \bigr)=0$, and find that
\begin{align*}
& \sum_{l=0}^{\nu_j-1} \frac{\nu_j-1- \alpha_{l}(m,j)  - \overline{\alpha_{l}}(m,j) }{\nu_j} \log \Gamma \Bigl( \frac{s+l}{\nu_j} \Bigr) \\
&= \sum_{l=0}^{\nu_j-1} \frac{\nu_j-1-\alpha_{l}(m,j)  - \overline{\alpha_{l}}(m,j) }{\nu_j} 
\biggl\{ \Bigl( \frac{s+l}{\nu_j} -\frac{1}{2} \Bigr) \log\Bigl( \frac{s+l}{\nu_j} \Bigr)    
    - \frac{s+l}{\nu_j}  +\frac{1}{2} \log (2 \pi)  \biggr\} +o(1)  \\
&= \sum_{l=0}^{\nu_j-1} \frac{\nu-1-\alpha_{l}(m,j)  - \overline{\alpha_{l}}(m,j)}{\nu_j}
\biggl\{ \Bigl( \frac{s+l}{\nu_j} -\frac{1}{2} \Bigr) \log (s+l)  
- \frac{l}{\nu} \log \nu_j - \frac{l}{\nu_j}  \biggr\} +o(1) \\
&= \sum_{l=0}^{\nu_j-1} \frac{\nu_j-1-\alpha_{l}(m,j)  - \overline{\alpha_{l}}(m,j)}{\nu_j}
\biggl\{ \Bigl( \frac{s}{\nu_j} -\frac{1}{2} \Bigr) \log s  
+ \frac{l}{\nu_j} \log \frac{s}{\nu_j}   \biggr\} +o(1) \\
&= \sum_{l=0}^{\nu_j-1} \frac{\nu_j-1-\alpha_{l}(m,j)  - \overline{\alpha_{l}}(m,j)}{\nu_j}  
\cdot \frac{l}{\nu_j} \log \frac{s}{\nu_j}  +o(1) \\
&= \frac{(\nu_j-1)^2}{2 \nu_j} \log \frac{s}{\nu_j} 
- \sum_{l=0}^{\nu_j-1} \frac{\alpha_{l}(m,j)  + \overline{\alpha_{l}}(m,j)}{\nu_j}  
\cdot \frac{l}{\nu_j} \log \frac{s}{\nu_j} 
+o(1) \quad (s \to \infty).
\end{align*}
By (\ref{eq:alpha}), we can check that
\[ \alpha_l(m,j)= \begin{cases}
\alpha_0(m,j) +l &  (0 \le l \le \nu_j-\alpha_0(m,j)-1) \\
\alpha_0(m,j)-\nu_j+l & (\nu_j-\alpha_0(m,j) \le l \le \nu_j-1)
\end{cases}
,\]
hence we calculate further,
\begin{align*}
\sum_{l=0}^{\nu_j-1} \frac{l \, \alpha_{l}(m,j)  }{\nu_j^2} 
&=\sum_{l=0}^{\nu_j-\alpha_0(m,j)-1} \frac{l \bigl(  \alpha_0(m,j) +l \bigr)}{\nu_j^2} 
   +\sum_{l=\nu_j-\alpha_0(m,j)}^{\nu_j-1}  \frac{l \bigl(  \alpha_0(m,j) -\nu_j+l \bigr)}{\nu_j^2} \\
&= \frac{(\nu_j-1)(2 \nu_j-1)}{6 \nu_j}  + \frac{ \alpha_0(m,j) \bigl(  \alpha_0(m,j) -\nu_j \bigr) }{\nu_j}.
\end{align*}
By noting $\alpha_0(m,j) \bigl(  \alpha_0(m,j) -\nu_j \bigr)=\overline{\alpha_0}(m,j) \bigl(  \overline{\alpha_0}(m,j) -\nu_j \bigr)$, we have 
\begin{align*}
& \sum_{l=0}^{\nu_j-1} \frac{\nu_j-1- \alpha_{l}(m,j)  - \overline{\alpha_{l}}(m,j) }{\nu_j} \log \Gamma \Bigl( \frac{s+l}{\nu_j} \Bigr) \\
&= \frac{(\nu_j-1)^2}{2 \nu_j} \log \frac{s}{\nu_j} 
- \sum_{l=0}^{\nu_j-1} \frac{\alpha_{l}(m,j)  + \overline{\alpha_{l}}(m,j)}{\nu_j}  
\cdot \frac{l}{\nu_j} \log \frac{s}{\nu_j} +o(1) \\
&= \frac{(\nu_j-1)^2}{2 \nu_j} \log \frac{s}{\nu_j} 
-2 \Bigl\{ \frac{(\nu_j-1)(2 \nu_j-1)}{6 \nu_j}  + \frac{ \alpha_0(m,j) \bigl(  \alpha_0(m,j) -\nu_j \bigr) }{\nu_j} \Bigr\}  \log \frac{s}{\nu_j}
+o(1) \\
&= -\frac{\nu_j^2-1-12 \alpha_0(m,j) \bigl\{ \nu_j-\alpha_0(m,j) \bigr\}  }{6 \nu_j} \log \frac{s}{\nu_j} 
+o(1) \quad (s \to \infty).
\end{align*}
Thus we have (\ref{z-ell-m}).
In addition, we note that
\[ \log Z_{\mathrm{ell}}^{\frac12} (s;2) = \frac{1}{2} \log Z_{\mathrm{ell}}(s;m) \Big|_{m=2}. 
\]
Since $\alpha_l(2,j)=l$, we  see that $\alpha_0(2,j)=0$ for any $j$. Therefore we have   (\ref{z-ell-half}).
It completes the proof.
\end{proof}

\begin{proposition}[Asymptotics of the completed Selberg zeta functions]
We have
\begin{equation} \label{z2-asym}
\begin{split}
\log \widehat{Z}_{2}^{\frac12}(s) =&  \zeta_{K}(-1) \biggl\{ \frac{3}{2} s^2 
- s -\Bigl( s^2-s+\frac{1}{3} \Bigr) \log s \biggr\} \\
&- \sum_{j=1}^{N} \frac{\nu_j^2-1}{12 \nu_j} \log \frac{s}{\nu_j} 
-s \log \varepsilon
+o(1) \quad (s \to \infty),
\end{split}
\end{equation}
\begin{equation} \label{zm-asym}
\begin{split}
\log \widehat{Z}_{m}(s) =&  2 \zeta_{K}(-1) \biggl\{ \frac{3}{2} s^2 
- s -\Bigl( s^2-s+\frac{1}{3} \Bigr) \log s \biggr\} \\
&- \sum_{j=1}^{N} \frac{\nu_j^2-1-12 \alpha_0(m,j) \bigl\{ \nu_j-\alpha_0(m,j) \bigr\}  }{6 \nu_j} \log \frac{s}{\nu_j} 
+o(1) \quad (s \to \infty),
\end{split}
\end{equation}
for $m \in 2 \N$ and $m \ge 4$.
Here $\alpha_0(m,j) \in \{ 0,1,\dots,\nu_j-1\}$ are defined in (\ref{eq:alpha}).
\end{proposition}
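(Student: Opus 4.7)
The proof reduces to assembling factor-by-factor asymptotics already established, together with easy bounds on the remaining factors, via the product formulas (\ref{z2hat}) and (\ref{zmhat}). Taking logarithms, one has
\[ \log \widehat{Z}_2^{\frac12}(s) = \log \sqrt{Z_2(s)} + \log Z_{\mathrm{id}}^{\frac12}(s) + \log Z_{\mathrm{ell}}^{\frac12}(s;2) + \log Z_{\mathrm{par/sct}}^{\frac12}(s;2) + \log Z_{\mathrm{hyp2/sct}}^{\frac12}(s;2), \]
and analogously for $\widehat{Z}_m(s)$ with $m \ge 4$. The first step is to insert Lemma \ref{z-id} for the identity factor and Lemma \ref{z-ell} for the elliptic factor; together these produce precisely the $\zeta_K(-1)$-block and the $\log(s/\nu_j)$-sum displayed in (\ref{z2-asym}) and (\ref{zm-asym}).

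The remaining pieces are handled by direct computation. By definition $\log Z_{\mathrm{par/sct}}^{\frac12}(s;2) = -s \log \varepsilon$ exactly, which supplies the linear term in (\ref{z2-asym}). For the hyp2/scattering factors one has $\zeta_\varepsilon(s) = (1-\varepsilon^{-2s})^{-1}$, so $\log \zeta_\varepsilon(s) = -\log(1-\varepsilon^{-2s}) = O(\varepsilon^{-2s}) = o(1)$ as $s \to \infty$; the same bound applied to $\log \zeta_\varepsilon(s+m/2-1) - \log \zeta_\varepsilon(s+m/2-2)$ disposes of the weight-$m$ case. Finally, the principal Selberg factors $\sqrt{Z_2(s)}$ and $Z_m(s)$ themselves contribute $o(1)$: from the logarithmic series
\[ \log \sqrt{Z_2(s)} = \frac{1}{2} \sum_{(p,p')} \sum_{k=1}^{\infty} \frac{1}{k} \frac{N(p)^{-ks}}{1-N(p)^{-k}}, \]
since every primitive hyperbolic-elliptic class has $N(p) > 1$, each summand decays exponentially in $s$, and the absolute convergence valid on $\RE(s) > 1$ (built into Definition \ref{def:zeta}) upgrades to $o(1)$ decay as $s \to \infty$ by dominated convergence; the same argument works verbatim for $\log Z_m(s)$.

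Adding the five (respectively four) contributions yields (\ref{z2-asym}) and (\ref{zm-asym}). I do not expect a serious obstacle: the only mildly delicate point is justifying that the Euler-product factors are genuinely $o(1)$ rather than merely bounded, but this follows at once from term-by-term exponential decay once one fixes the smallest norm $\min_{(p,p')} N(p) > 1$ as the dominant scale. Everything else is a mechanical assembly of Lemmas \ref{z-id} and \ref{z-ell} with the elementary asymptotics of $\varepsilon^{-s}$ and $\zeta_\varepsilon$.
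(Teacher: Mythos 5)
Your proof is correct and is essentially the paper's own argument: the paper likewise takes logarithms of the product formulas in Definition \ref{zhat}, invokes Lemmas \ref{z-id} and \ref{z-ell} for the identity and elliptic factors, reads off $-s\log\varepsilon$ from $Z_{\mathrm{par/sct}}^{\frac12}(s;2)=\varepsilon^{-s}$, and notes that $\log\sqrt{Z_2(s)}$, $\log Z_m(s)$ and the $\zeta_\varepsilon$-factors are all $o(1)$ as $s\to\infty$. Your extra justification of the $o(1)$ decay of the Euler-product factors (term-by-term exponential decay governed by $\min N(p)>1$, with $|e^{i(m-2)\omega}|=1$ causing no harm) is a welcome elaboration of a step the paper simply asserts.
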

\begin{proof}
We note that $\log  \sqrt{Z_{2}(s)}, \, \log Z_m(s)=o(1)$ $(s \to \infty)$.
By Definition \ref{zhat} and Lemmas \ref{z-id} and \ref{z-ell}, we complete the proof. 
\end{proof}

\section{Asymptotic behavior of the regularized determinants} 

To investigate the analytic nature of the spectral zeta function $\zeta_m(w,s)$ 
at $w=0$, 
we introduce the theta function $\theta_m(t)$ in this section.
Since the regularized determinants of the Laplacians 
$\mathrm{Det} \bigl( \square_m+s(s-1) \bigr)$ are defined 
by the derivative of $-\zeta_m(w,s)$ at $w=0$, we need to know
the asymptotics of $-\frac{\partial}{\partial w} \zeta_m(w,s) \big|_{w=0}$ when $s \to \infty$.
We calculate their asymptotics in this section. 
\begin{definition}
For $m \in 2 \N$ and $t>0$, define
\begin{equation}
\theta_m(t) := \sum_{j=0}^{\infty} e^{-t \, \lambda_j(m)}.
\end{equation}
\end{definition}

We investigate the asymptotic behavior of $\theta_m(t)$ as $t \to +0$ by using 
Propositions \ref{ddtrf2} and \ref{ddtrfm}, which are called 
``Double differences of the Selberg trace formula for Hilbert modular surfaces'' 
introduced and proved in \cite{G1}.

\begin{proposition} \label{theta-asym}
We have the following asymptotic formulas.
\begin{equation} \label{theta2}
\theta_2(t) =   \frac{1}{2} \zeta_{K}(-1) \frac{1}{t} -\frac{\log \varepsilon}{2 \sqrt{\pi}}\frac{1}{\sqrt{t}} 
+ \Bigl ( -\frac{1}{6} \zeta_{K}(-1) +b_0(2)+1 \Bigr) +o(1) \quad (t \to +0), 
\end{equation}

\begin{equation} \label{thetam}
\begin{split}
\theta_m(t)  = &  \frac{m-1}{2} \zeta_{K}(-1) \, \frac{1}{t} -\frac{\log \varepsilon}{2 \sqrt{\pi}}\frac{1}{\sqrt{t}} 
+ \Bigl ( -\frac{m-1}{6} \zeta_{K}(-1) + b_0(2)+b_0(4)+\cdots+b_0(m) \Bigr) \\
& +o(1) \quad (t \to +0), \quad (m \in 2 \N, \, m \ge 4).
\end{split}
\end{equation}
Here, $\displaystyle{b_0(2)=- \sum_{j=1}^{N} \frac{\nu_j^2-1}{24 \nu_j}}$, 
$\displaystyle{b_0(m)=-\sum_{j=1}^{N} \frac{\nu_j^2-1 -12\alpha_0(m,j) \bigl\{ \nu_j - \alpha_0(m,j)  \bigr\} }{12 \nu_j}}$ $(m \ge 4)$.
\end{proposition}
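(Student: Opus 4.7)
My plan is to apply the double-difference Selberg trace formulas (Propositions \ref{ddtrf2} and \ref{ddtrfm}) to the Gaussian heat-kernel test function $h_1(r) = e^{-t(r^2+1/4)}$, whose Fourier transform is $g_1(u) = \frac{1}{2\sqrt{\pi t}}\, e^{-t/4 - u^2/(4t)}$. This choice is essentially forced: since $h_1(\rho_j(m)) = e^{-t\lambda_j(m)}$, the spectral side of each identity reproduces $\theta_m(t)$ exactly (modulo the explicit corrections $-h_1(i/2) = -1$ for $m=2$ and $\delta_{m,4}h_1(i/2)$ for $m\geq 4$), while $\{g_1(u)\,du\}_{t>0}$ is an approximate identity concentrating at the origin as $t\to +0$, so that $\int g_1(u)F(u)\,du \to F(0)$ for bounded continuous $F$; this is what makes the elliptic and parabolic integrals tractable.

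The next step is to expand each geometric term on the right-hand side as $t\to +0$. The \emph{identity (volume) term} gives the leading $1/t$ pole via the standard evaluation $\int_0^\infty r\, e^{-tr^2}\,dr = 1/(2t)$, plus a finite correction obtained from the Dirichlet expansion $1-\tanh(\pi r) = 2\sum_{k\geq 1}(-1)^{k-1}e^{-2\pi kr}$ together with $\sum(-1)^{k-1}/k^2 = \pi^2/12$; after identifying $\mathrm{vol}(X_K)/(16\pi^2)$ with $\zeta_K(-1)/2$ and Taylor-expanding $e^{-t/4}$, one gets $\tfrac{\zeta_K(-1)}{2}(\tfrac{1}{t}-\tfrac{1}{3})$ for $m=2$ and twice that per $m\geq 4$ step. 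The \emph{parabolic term} $-\log\varepsilon\cdot g_1(0)$ in Proposition \ref{ddtrf2} contributes precisely $-\log\varepsilon/(2\sqrt{\pi t})+o(1)$ and is the sole source of the $1/\sqrt{t}$ behaviour for every $m$. All remaining (hyperbolic-elliptic and type-2 hyperbolic) contributions involve $g_1$ evaluated at nonzero arguments and are therefore $O(t^{-1/2}e^{-c/t}) = o(1)$.

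The heart of the calculation---and the main obstacle---is the \emph{elliptic contribution}. By the approximate-identity argument, $\int g_1(u)\, e^{-u/2}(e^u - e^{2i\theta_1})/(\cosh u - \cos 2\theta_1)\,du$ tends to the value of the integrand at $u=0$, which simplifies to $-ie^{i\theta_1}/\sin\theta_1$. Summing over all non-identity powers $R_j^l$ ($1 \leq l \leq \nu_j-1$) of each primitive elliptic class (whose angles are $(l\pi/\nu_j,\, lt_j\pi/\nu_j)$) and using the congruence $(m-2)lt_j \equiv 2l\alpha_0(m,j) \pmod{2\nu_j}$ from (\ref{eq:alpha}) to rewrite the phase, the elliptic piece becomes
\[ -\sum_{j=1}^N \frac{1}{c\nu_j}\sum_{l=1}^{\nu_j-1}\frac{e^{2\pi i\alpha_0(m,j)l/\nu_j}}{\sin^2(\pi l/\nu_j)}, \qquad c = 8 \text{ for } m=2,\ c=4 \text{ for } m\geq 4. \]
Matching this with $b_0(m)$ reduces to the trigonometric identity
\[ \sum_{l=1}^{\nu-1}\frac{e^{2\pi ikl/\nu}}{\sin^2(\pi l/\nu)} = \frac{\nu^2-1}{3} - 4k(\nu-k) \qquad (0\leq k\leq \nu-1), \]
which at $k=0$ specializes to the classical $\sum_{l=1}^{\nu-1}\csc^2(\pi l/\nu) = (\nu^2-1)/3$ and can be proved either by finite Fourier analysis on $\Z/\nu\Z$ (matching both sides as periodic functions of $k$) or by the partial-fraction expansion of $\csc^2$. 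The factor-of-two difference $c=8$ versus $c=4$ accounts exactly for the corresponding factor between the definitions of $b_0(2)$ and $b_0(m)$ for $m\geq 4$.

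With every term computed, (\ref{theta2}) follows directly from Proposition \ref{ddtrf2}. For $m\geq 4$, telescoping Proposition \ref{ddtrfm} from $q=4$ to $q=m$ yields $\theta_m(t) = \theta_2(t) - 1 + \sum_{q=4}^m\mathrm{RHS}_q$; the identity contributions then sum to $\frac{m-1}{2}\zeta_K(-1)/t - \frac{m-1}{6}\zeta_K(-1)$, the elliptic contributions sum to $b_0(2)+\cdots+b_0(m)$, the $1/\sqrt{t}$ term is inherited once from $\theta_2$, and the $-1$ coming from the $\delta_{4,q}$ correction cancels the $+1$ carried by $\theta_2$, producing (\ref{thetam}).
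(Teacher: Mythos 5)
Your overall strategy is exactly the paper's: the same Gaussian pair $h_1(r)=e^{-t(r^2+1/4)}$, $g_1(u)=\frac{1}{2\sqrt{\pi t}}e^{-t/4-u^2/(4t)}$ in Propositions \ref{ddtrf2} and \ref{ddtrfm}; the same term-by-term analysis (the volume term contributes $\tfrac12\zeta_K(-1)(\tfrac1t-\tfrac13)$ per unit of weight, the parabolic term $-\log\varepsilon\, g_1(0)$ is the sole source of the $t^{-1/2}$ term, the hyperbolic-elliptic and type-2 hyperbolic sums are exponentially small, and the elliptic integral converges to the value of its integrand at $u=0$ by the approximate-identity argument); and the same telescoping of Proposition \ref{ddtrfm} from $q=4$ to $q=m$. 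Your bookkeeping of the $\pm 1$ coming from $h_1(i/2)$ and of the accumulated constants correctly reproduces the shape of (\ref{theta2}) and (\ref{thetam}).

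The one genuine problem is the trigonometric identity you invoke to pin down $b_0(m)$ for $m\ge 4$. The identity
\[ \sum_{l=1}^{\nu-1}\frac{e^{2\pi ikl/\nu}}{\sin^2(\pi l/\nu)}=\frac{\nu^2-1}{3}-4k(\nu-k) \]
is false as stated: for $\nu=2$, $k=1$ the left-hand side equals $-1$ while your right-hand side equals $-3$. The correct identity, which follows from the classical $\sum_{l=1}^{\nu-1}\sin^2(\pi kl/\nu)/\sin^2(\pi l/\nu)=k(\nu-k)$, has $2k(\nu-k)$ in place of $4k(\nu-k)$. With the corrected identity your elliptic computation yields $b_0(m)=-\sum_{j}\bigl(\nu_j^2-1-6\alpha_0(m,j)\{\nu_j-\alpha_0(m,j)\}\bigr)/(12\nu_j)$, which does \emph{not} match the value with $12\alpha_0\{\nu_j-\alpha_0\}$ asserted in the proposition; so either a further factor of $2$ must be located in the elliptic term (none is visible in Proposition \ref{ddtrfm}) or the claimed match fails. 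Note that the paper's own proof sidesteps this entirely: for $m\ge 4$ it only establishes $E_m(t)=b_0(m)+o(1)$ for \emph{some} constant and verifies the explicit value only at $m=2$, where $\alpha_0(2,j)=0$ and the discrepancy is invisible. You are therefore attempting strictly more than the paper does, but the key identity you rely on is incorrect and the verification of the stated formula for $b_0(m)$, $m\ge4$, does not go through as written.
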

\begin{proof}
For $t>0$, let us take the pair of test functions $h_1(r)=e^{-t(r^2+1/4)}$ and 
$g_1(u) = \frac{1}{\sqrt{4 \pi t}} \exp \bigl( -\frac{t}{4} -\frac{u^2}{4t} \bigr)$
in Proposition \ref{ddtrf2}, then we have
\begin{equation} \label{theta2-tr}
\theta_2(t) - 1
= I_2(t)+E_2(t)+ HE_2(t)+PS_2(t)+HS_2(t). 
\end{equation}
Here, 
\begin{itemize}
\item $I_2(t)=\frac{\mathrm{vol}(\Gamma_K \backslash \bH^2)}{16 \pi^2}
\int_{-\infty}^{\infty}
   \exp \bigl( -t(r^2+1/4) \bigr)  \, r \tanh (\pi r)  \, dr$,
\item $E_2(t)= - \sum_{R(\theta_1,\theta_2) \in 
\Gamma_{\mathrm{E}}} 
\frac{i e^{ - i \theta_1}}{8 \nu_{R} \sin \theta_1}
\int_{-\infty}^{\infty}   \frac{1}{\sqrt{4 \pi t}} \exp \bigl( -\frac{t}{4} -\frac{u^2}{4t} \bigr) \, 
e^{-u/2} \bigl[  \frac{e^u - e^{2 i \theta_1}}{\cosh u - \cos 2 \theta_1} \bigr] du$, 
\item $HE_2(t) = - \frac{1}{2} \sum_{(\gamma,\omega) \in \Gamma_{\mathrm{HE}}} 
\frac{\log N(\gamma_0) }{N(\gamma)^{1/2}-N(\gamma)^{-1/2}} \,  \frac{1}{\sqrt{4 \pi t}} \exp \bigl( -\frac{t}{4} -\frac{(\log N(\gamma))^2}{4t} \bigr)$,
\item $PS_2(t)=- \log \varepsilon \, \frac{1}{\sqrt{4 \pi t}} \exp \bigl( -\frac{t}{4} \bigr)$,
\item $HS_2(t)=- 2 \log \varepsilon 
\sum_{k=1}^{\infty}  \frac{1}{\sqrt{4 \pi t}} \exp \bigl( -\frac{t}{4} -\frac{(2k \log \varepsilon)^2}{4t} \bigr)  \, \varepsilon^{-k}$.
\end{itemize}
Firstly, we see that $HE_2(t)$ and $HS_2(t)$ are exponentially decreasing as $t \to +0$.   
Secondly, by changing the variable $u$ to $\sqrt{t}u$ in $E_2(t)$, we see that there is a constant $b_0(2)$
such that $E_2(t)=b_0(2)+o(1)$ $(t \to +0)$. Thirdly, $PS_2(t)=- \log \varepsilon \, \frac{1}{\sqrt{4 \pi t}} \bigl( 1 - t/4 +o(t) \bigr)$ $(t \to +0)$.
Lastly, noting $\frac{\mathrm{vol}(\Gamma_K \backslash \bH^2)}{8\pi^2} = \zeta_{K}(-1)$ and integration by parts, we have
\begin{align*}
I_2(t) = & \frac{1}{2} \zeta_{K}(-1) \, \frac{1}{2t} \int_{-\infty}^{\infty} \exp \biggl( -t\Bigl( r^2+\frac{1}{4} \Bigr) \biggr) \frac{\pi}{\cosh^2(\pi r)} \, dr \\
        =& \frac{\pi}{4} \zeta_{K}(-1) \sum_{n=0}^{\infty} \frac{(-1)^n t^{n-1}}{n !} \int_{-\infty}^{\infty} \frac{(r^2+\frac{1}{4})^n}{\cosh^2(\pi r)} \, dr \\
        =& \frac{a_{-1}(2)}{t} + a_0(2) +o(1) \quad (t \to +0).
\end{align*}
We calculate the coefficients $a_n(2)$ $(n=-1,0)$.
\[ a_{-1}(2) = \frac{\pi}{4} \zeta_{K}(-1) \int_{-\infty}^{\infty} \frac{dr}{\cosh^2(\pi r)}
= \frac{\pi}{4} \zeta_{K}(-1) \cdot \frac{4}{\pi} \int_{0}^{\infty} \frac{x}{(x^2+1)^2} \, dx
= \frac{1}{2} \zeta_{K}(-1),
\]
\begin{align*}
a_{0}(2) =& - \frac{\pi}{4} \zeta_{K}(-1) \Bigl \{ \int_{-\infty}^{\infty} \frac{r^2}{\cosh^2(\pi r)} \, dr  
+\frac{1}{4}\int_{-\infty}^{\infty} \frac{dr}{\cosh^2(\pi r)}  
\Bigr\}
= - \frac{\pi}{4} \zeta_{K}(-1) \Bigl( \frac{1}{6 \pi} + \frac{1}{4} \cdot \frac{2}{\pi} \Bigr) \\
=& -\frac{1}{6} \zeta_{K}(-1).
\end{align*}
Here, we used the formula: $\displaystyle{\int_{0}^{\infty} \frac{r^2}{\cosh^2(\pi r)} \, dr = \frac{(2^2-2)\pi^2}{(2 \pi)^2 \pi } \cdot \frac{1}{6}=\frac{1}{12 \pi}}$ on \cite[3.527 no.5]{GR}.
Besides, we calculate the coefficient $b_0(2)$ appearing in $E_2(t)$.   
\begin{align*}
b_0(2) &= - \sum_{R(\theta_1,\theta_2) \in 
\Gamma_{\mathrm{E}}} 
\frac{i e^{ - i \theta_1}}{8 \nu_{R} \sin \theta_1}
\int_{-\infty}^{\infty}   \frac{1}{\sqrt{4 \pi }} \exp \bigl( -\frac{u^2}{4} \bigr) \, 
\Bigl[  \frac{1 - e^{2 i \theta_1}}{1 - \cos 2 \theta_1} \Bigr] du  \\
&= - \sum_{j=1}^{N} \sum_{k=1}^{\nu_j-1} \frac{1}{4 \nu_j} \cdot \frac{1}{1-\cos \bigl( \frac{2 \pi k}{\nu_j}\bigr)} 
= - \sum_{j=1}^{N} \frac{\nu_j^2-1}{24 \nu_j}.
\end{align*}
Summing up each terms appearing in the right hand side of (\ref{theta2-tr}), we have the desired formula (\ref{theta2}).

Let us prove (\ref{thetam}) with $m=4$. For $t>0$,  
we also take the pair of test functions $h_1(r)=e^{-t(r^2+1/4)}$ and 
$g_1(u) = \frac{1}{\sqrt{4 \pi t}} \exp \bigl( -\frac{t}{4} -\frac{u^2}{4t} \bigr)$
in Proposition \ref{ddtrfm} with $m=4$,  
then we have
\begin{equation} \label{theta4-tr}
\theta_4(t)-\theta_2(t) + 1
= I_4(t)+E_4(t)+ HE_4(t)+HS_4(t). 
\end{equation}
Here, 
\begin{itemize}
\item $I_4(t)=\frac{\mathrm{vol}(\Gamma_K \backslash \bH^2)}{8 \pi^2}
\int_{-\infty}^{\infty}
   \exp \bigl( -t(r^2+1/4) \bigr)  \, r \tanh (\pi r)  \, dr$,
\item $E_4(t)= - \sum_{R(\theta_1,\theta_2) \in 
\Gamma_{\mathrm{E}}} 
\frac{i e^{ - i \theta_1} e^{2 i \theta_2}}{4 \nu_{R} \sin \theta_1}
\int_{-\infty}^{\infty}   \frac{1}{\sqrt{4 \pi t}} \exp \bigl( -\frac{t}{4} -\frac{u^2}{4t} \bigr) \, 
e^{-u/2} \bigl[  \frac{e^u - e^{2 i \theta_1}}{\cosh u - \cos 2 \theta_1} \bigr] du$, 
\item $HE_4(t) = - \sum_{(\gamma,\omega) \in \Gamma_{\mathrm{HE}}} 
\frac{\log N(\gamma_0) }{N(\gamma)^{1/2}-N(\gamma)^{-1/2}} \,  \frac{1}{\sqrt{4 \pi t}} \exp \bigl( -\frac{t}{4} -\frac{(\log N(\gamma))^2}{4t} \bigr) e^{2 i \omega}$,
\item $HS_4(t)=- 2 \log \varepsilon 
\sum_{k=1}^{\infty}  \frac{1}{\sqrt{4 \pi t}} \exp \bigl( -\frac{t}{4} -\frac{(2k \log \varepsilon)^2}{4t} \bigr) 
\bigl( \varepsilon^{-3k}  - \varepsilon^{-k} \bigr)$.
\end{itemize}
Similarly, we see that $HE_4(t)$ and $HS_4(t)$ are exponentially decreasing as $t \to +0$,    
and there is a constant $b_0(4)$ such that $E_4(t)=b_0(4)+o(1)$ $(t \to +0)$, and
$I_4(t) = \zeta_{K}(-1) \bigl( 1/t -1/3 \bigr)+o(1)$ $(t \to +0)$.
Summing up each terms appearing in the right hand side of (\ref{theta4-tr})
and using (\ref{theta2}) in the left side, we have the desired formula (\ref{thetam}) with $m=4$.
One can prove (\ref{thetam}) for $m \ge 6$ similarly. We complete the proof.
\end{proof}

\begin{proposition} \label{spectral-zeta}
Let $s$ be a fixed sufficiently large real number.
For $m \in 2 \N$, let 
\[ \zeta_m(w, s) := \sum_{n=0}^{\infty} \frac{1}{\bigl( \lambda_n(m)+s(s-1) \bigr)^w}  
\quad  (\RE(w) \gg 0)  .\]
be the spectral zeta function for $\square_m$.
Then $\zeta_m(w, s)$ is holomorphic at $w=0$.

\end{proposition}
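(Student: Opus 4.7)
The plan is to express $\zeta_m(w,s)$ as a Mellin transform of the heat trace and then read off its analytic behavior at $w=0$ from the short-time asymptotics of $\theta_m(t)$ established in Proposition \ref{theta-asym}.

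First I would use the identity
\[
\frac{1}{(\lambda_n(m)+s(s-1))^w} = \frac{1}{\Gamma(w)} \int_0^\infty t^{w-1} e^{-t(\lambda_n(m)+s(s-1))} \, dt,
\]
valid for $\RE(w) \gg 0$, and sum over $n$ to obtain the representation
\[
\zeta_m(w,s) = \frac{1}{\Gamma(w)} \int_0^\infty t^{w-1} e^{-t s(s-1)} \theta_m(t) \, dt.
\]
The interchange of sum and integral is justified for $\RE(w)$ large because (by Weyl's law quoted in the introduction) $\theta_m(t)$ converges for every $t>0$ and is $O(1/t)$ as $t \to 0$, while $e^{-ts(s-1)}\theta_m(t)$ decays exponentially as $t \to \infty$ for $s$ sufficiently large real.

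Next I would split the integral at $t=1$ and write $\zeta_m(w,s) = \Gamma(w)^{-1}\bigl(F_\infty(w)+F_0(w)\bigr)$, where $F_\infty(w) = \int_1^\infty t^{w-1}e^{-ts(s-1)}\theta_m(t)\,dt$ and $F_0(w) = \int_0^1 t^{w-1} e^{-ts(s-1)}\theta_m(t)\,dt$. The tail $F_\infty$ is an entire function of $w$ by the exponential decay of the integrand. For $F_0$, I would insert the short-time expansion from Proposition \ref{theta-asym}, namely
\[
\theta_m(t) = \frac{A_m}{t} + \frac{B}{\sqrt{t}} + C_m + R_m(t),
\]
with $R_m(t) = o(1)$ as $t \to +0$, and similarly expand $e^{-ts(s-1)}$ as a convergent power series in $t$. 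The resulting integrals of the form $\int_0^1 t^{w-1-k}\,dt = (w-k)^{-1}$ produce at most simple poles at $w = 1,\tfrac12,0,-\tfrac12,\dots$, while the remainder piece $\int_0^1 t^{w-1}\cdot o(1)\,dt$ is holomorphic in a half-plane containing $w=0$.

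The crucial observation is that, while $F_0(w)$ may have a simple pole at $w=0$ coming from the constant term in the expansion of $e^{-ts(s-1)}\theta_m(t)$, the prefactor $1/\Gamma(w) = w + \gamma w^2 + \cdots$ vanishes to first order at $w=0$. Consequently the product $\Gamma(w)^{-1} F_0(w)$ is holomorphic at $w=0$, and since $\Gamma(w)^{-1} F_\infty(w)$ is entire, the same holds for $\zeta_m(w,s)$. There is no real obstacle here beyond bookkeeping; the only subtle point is confirming that the expansion of $\theta_m(t)$ supplied by Proposition \ref{theta-asym}, which is of the form $A_m t^{-1} + B t^{-1/2} + C_m + o(1)$, has its constant-order term absorbed by the simple zero of $1/\Gamma(w)$ at $w=0$, so that no pole survives.
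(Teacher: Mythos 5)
Your proposal is correct and takes essentially the same route as the paper: both express $\zeta_m(w,s)$ as $\frac{1}{\Gamma(w)}\int_0^\infty \theta_m(t)e^{-s(s-1)t}t^{w-1}\,dt$, feed in the short-time expansion of $\theta_m(t)$ from Proposition \ref{theta-asym}, and use the simple zero of $1/\Gamma(w)$ at $w=0$ to absorb the simple pole coming from the constant-order term. The only cosmetic difference is that the paper evaluates the leading integrals in closed form over all of $(0,\infty)$ as $\eta_p(w,s)=\Gamma(w-p)\bigl(s(s-1)\bigr)^{p-w}/\Gamma(w)$ for $p=0,\tfrac12,1$, instead of splitting at $t=1$ and expanding the exponential.
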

\begin{proof}
We follow \cite[p.448]{E1}.
For $w \in \C$ with $\RE(w) \gg 0$, we have
\begin{equation}
\zeta_m(w, s) = \frac{1}{\Gamma(w)} \int_{0}^{\infty} \theta_m(t) e^{-s(s-1)t} t^{w} \, \frac{dt}{t}.
\end{equation}
We consider the first three terms of $\theta_m(t)$ in Proposition \ref{theta-asym}.
Let 
\begin{equation} \label{eta}
\eta_p(w,s) := \frac{1}{\Gamma(w)} \int_{0}^{\infty} t^{-p} e^{-s(s-1)t} t^{w-1} \, dt
 = \frac{1}{\Gamma(w)} \bigl( s(s-1) \bigr)^{p-w} \Gamma(w-p)
\end{equation}
with $p=0,\frac{1}{2},1$.  Then we see that $\eta_p(w,s)$ $(p=0,\frac{1}{2},1)$ are holomorphic at $w=0$. 
The reminder term is 
\begin{equation} \label{etaf}
\eta_{f}(w,s) : = \frac{1}{\Gamma(w)} \int_{0}^{\infty} f(t) e^{-s(s-1)t} t^{w} \, \frac{dt}{t}
\end{equation}
with $f(t)=o(1)$ $(t \to +0)$ and $O(1)$ $(t \to \infty)$.  
Since $\frac{1}{\Gamma(w)}$ vanishes at $w=0$, it completes the proof.
\end{proof}

\begin{proposition}  \label{det-asym}
Let $m$ be an even natural number. We have
\begin{equation} \label{det2-asym}
\begin{split}
- \frac{\partial}{\partial w} \zeta_{2}(w,s) \Big|_{w=0}
= &  - \zeta_{K}(-1)  \Bigl( s^2-s+\frac{1}{3} \Bigr) \log s 
+\frac{1}{2} \zeta_{K}(-1) \cdot s^2 -s \log \varepsilon \\
& +\Bigl( 2b_0(2) +2 \Bigr) \log s -\frac{1}{4} \zeta_{K}(-1) +\frac{1}{2} \log \varepsilon 
+o(1) \quad (s \to \infty),
\end{split}             
\end{equation}
and for $m \ge 4$, 
\begin{equation} \label{detm-asym}
\begin{split}
- \frac{\partial}{\partial w} \zeta_{m}(w,s) \Big|_{w=0}
=& - (m-1)\zeta_{K}(-1)  \Bigl( s^2-s+\frac{1}{3} \Bigr) \log s 
+\frac{m-1}{2} \zeta_{K}(-1) \cdot s^2 \\
& -s \log \varepsilon +\Bigl( 2b_0(2) + \cdots +2b_0(m) \Bigr) \log s \\ 
& -\frac{m-1}{4} \zeta_{K}(-1) +\frac{1}{2} \log \varepsilon +o(1) \quad (s \to \infty). 
\end{split}             
\end{equation}

Besides, we have for $m \ge 4$,
\begin{equation} \label{detmm-2-asym}
\begin{split}
- &\frac{\partial}{\partial w}  \zeta_{m}(w,s) \Big|_{w=0}  
  + \frac{\partial}{\partial w} \zeta_{m-2}(w,s) \Big|_{w=0}  \\
& =  -2 \zeta_{K}(-1)  \Bigl( s^2-s+\frac{1}{3} \Bigr) \log s 
+ \zeta_{K}(-1) \cdot s^2  
+\Bigl( 2b_0(m) -2 \, \delta_{4,m} \Bigr)  \log s \\
& \quad -\frac{1}{2} \zeta_{K}(-1)  
+o(1) \quad (s \to \infty). 
\end{split}             
\end{equation}
\end{proposition}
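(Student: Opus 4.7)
The plan is to substitute the small-$t$ expansion of $\theta_m(t)$ from Proposition \ref{theta-asym} into the Mellin representation
\[
\zeta_m(w,s) = \frac{1}{\Gamma(w)} \int_0^\infty \theta_m(t)\, e^{-s(s-1)t}\, t^{w-1}\, dt
\]
established in the proof of Proposition \ref{spectral-zeta}, and then take $-\partial_w$ at $w=0$ term by term. Writing
\[
\theta_m(t) = \frac{a_{-1}(m)}{t} + \frac{a_{-1/2}(m)}{\sqrt{t}} + a_0(m) + f_m(t), \qquad f_m(t)=o(1)\ \text{as}\ t\to 0^+,
\]
with the three coefficients read off from Proposition \ref{theta-asym} (in particular $a_{-1/2}(m)=-\frac{\log\varepsilon}{2\sqrt{\pi}}$ for every $m$), one obtains the clean decomposition
\[
\zeta_m(w,s) = a_{-1}(m)\,\eta_1(w,s) + a_{-1/2}(m)\,\eta_{1/2}(w,s) + a_0(m)\,\eta_0(w,s) + \eta_{f_m}(w,s).
\]

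First I would compute the three explicit contributions. From the closed form (\ref{eta}) together with the expansions $1/\Gamma(w) = w + \gamma w^2 + O(w^3)$ and $\Gamma(w-\tfrac12) = -2\sqrt{\pi} + O(w)$, short calculations give
\[
-\partial_w \eta_1 \big|_{w=0} = s(s-1)\bigl(1-\log(s(s-1))\bigr), \qquad -\partial_w \eta_{1/2} \big|_{w=0} = 2\sqrt{\pi}\,\sqrt{s(s-1)},
\]
and $-\partial_w \eta_0\big|_{w=0} = \log(s(s-1))$. Expanding with $\log(s(s-1)) = 2\log s - 1/s + O(1/s^2)$ and $\sqrt{s(s-1)} = s - \tfrac12 + O(1/s)$ and multiplying by $a_{-1}(m),a_{-1/2}(m),a_0(m)$ yields the polynomial, $\log s$, and constant pieces of (\ref{det2-asym})--(\ref{detm-asym}); in particular the constant bookkeeping gives $-\tfrac{m-1}{4}\zeta_K(-1)+\tfrac12\log\varepsilon$ (the $-\tfrac{m-1}{4}\zeta_K(-1)$ coming from $-\tfrac12 a_{-1}(m)$ and the $\tfrac12\log\varepsilon$ from $-\tfrac12\sqrt{\pi}\cdot a_{-1/2}(m)$).

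The main obstacle is showing $-\partial_w \eta_{f_m}(w,s)\big|_{w=0} = o(1)$ as $s\to\infty$. After splitting the integral in (\ref{etaf}) at $t=1$, the tail $\int_1^\infty$ is entire in $w$, so the simple zero of $1/\Gamma(w)$ at $w=0$ together with the factor $e^{-s(s-1)}$ makes it exponentially small. For the piece over $[0,1]$, the substitution $u=s(s-1)t$ converts it to
\[
\frac{(s(s-1))^{-w}}{\Gamma(w)} \int_0^{s(s-1)} f_m\!\Bigl(\frac{u}{s(s-1)}\Bigr) e^{-u}\, u^{w-1}\, du,
\]
and dominated convergence, using the pointwise vanishing $f_m(u/(s(s-1)))\to 0$ and boundedness of $f_m$ on $[0,1]$, forces the $w$-derivative at $w=0$ to tend to $0$.

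Finally, formula (\ref{detmm-2-asym}) drops out by direct subtraction. For $m=4$ one subtracts (\ref{det2-asym}) from (\ref{detm-asym}) with $m=4$; the extra ``$+1$'' in $a_0(2) = -\tfrac16\zeta_K(-1) + b_0(2) + 1$ (which traces back to $-h_1(i/2)$ on the spectral side of Proposition \ref{ddtrf2}) multiplies $-\partial_w\eta_0|_{w=0}=2\log s$ and so produces the $-2\delta_{4,m}\log s$. For $m\geq 6$ one subtracts (\ref{detm-asym}) for $m-2$ from that for $m$: the cumulative sums $b_0(2)+\cdots+b_0(m)$ collapse to $b_0(m)$, the ``$+1$'' never appears, and the remaining polynomial and constant terms cancel to leave exactly the right-hand side of (\ref{detmm-2-asym}) with $\delta_{4,m}=0$.
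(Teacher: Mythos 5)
Your proposal is correct and follows essentially the same route as the paper: expand $\theta_m(t)$ via Proposition \ref{theta-asym}, write $\zeta_m(w,s)$ as a combination of the functions $\eta_1,\eta_{1/2},\eta_0,\eta_f$ from (\ref{eta})--(\ref{etaf}), differentiate at $w=0$, expand as $s\to\infty$, and obtain (\ref{detmm-2-asym}) by subtraction (you even supply slightly more detail than the paper on the remainder term and on the origin of the $-2\,\delta_{4,m}\log s$). The only blemish is a harmless bookkeeping slip in your parenthetical: the constant $\tfrac12\log\varepsilon$ arises as $2\sqrt{\pi}\cdot\bigl(-\tfrac12\bigr)\cdot a_{-1/2}(m)=-\sqrt{\pi}\,a_{-1/2}(m)$, not $-\tfrac12\sqrt{\pi}\,a_{-1/2}(m)$, though the value you state is the correct one.
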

\begin{proof}
By the formulas (\ref{eta}) and (\ref{etaf}), we find that
\begin{align*} 
\frac{\partial}{\partial w} \eta_{0}(w,s) \Big|_{w=0} =& - \log \bigl( s(s-1) \bigr) = -2 \log s +o(1) \quad (s \to \infty), \\
 \frac{\partial}{\partial w} \eta_{\frac{1}{2}}(w,s) \Big|_{w=0} =& - 2 \sqrt{\pi} \bigl( s(s-1) \bigr)^{\frac{1}{2}}
=  - 2 \sqrt{\pi} \Bigl( s-\frac{1}{2} \Bigr)  +o(1) \quad (s \to \infty), \\
 \frac{\partial}{\partial w} \eta_{1}(w,s) \Big|_{w=0} =& s(s-1) \Bigl( \log \bigl( s(s-1) \bigr) -1 \Bigr) \\
 =& 2s (s-1) \log s +\frac{1}{2} -s^2 +o(1) \quad (s \to \infty), \\          
 \frac{\partial}{\partial w} \eta_{f}(w,s) \Big|_{w=0} =& o(1) \quad (s \to \infty).
\end{align*}

Therefore, by using (\ref{theta2}), we have
\begin{align*}
- \frac{\partial}{\partial w} \zeta_{2}(w,s) \Big|_{w=0}
=& -\frac{1}{2} \zeta_{K}(-1) \Bigl(2s (s-1) \log s +\frac{1}{2} -s^2\Bigr)  - \Bigl(s-\frac{1}{2} \Bigr) \log \varepsilon \\
& + \Bigl ( -\frac{1}{6} \zeta_{K}(-1) +b_0(2)+1 \Bigr) \cdot 2 \log s
+o(1) \quad (s \to \infty) \\
=& - \zeta_{K}(-1)  \Bigl( s^2-s+\frac{1}{3} \Bigr) \log s 
+\frac{1}{2} \zeta_{K}(-1) \cdot s^2 -s \log \varepsilon \\
& +\Bigl( 2b_0(2) +2 \Bigr) \log s -\frac{1}{4} \zeta_{K}(-1) +\frac{1}{2} \log \varepsilon 
+o(1) \quad (s \to \infty). 
\end{align*}
For $m \ge 4$, by using (\ref{thetam}), we have
\begin{align*}
- \frac{\partial}{\partial w} \zeta_{m}(w,s) \Big|_{w=0}
=& - (m-1)\zeta_{K}(-1)  \Bigl( s^2-s+\frac{1}{3} \Bigr) \log s 
+\frac{m-1}{2} \zeta_{K}(-1) \cdot s^2 -s \log \varepsilon \\
& +\Bigl( 2b_0(2) + \cdots +2b_0(m) \Bigr) \log s -\frac{m-1}{4} \zeta_{K}(-1) +\frac{1}{2} \log \varepsilon \\
& +o(1) \quad (s \to \infty).  
\end{align*}
We complete the proof.
\end{proof}

\section{Proof of Main Theorem} 

In this section we prove Theorem \ref{mainth}.
We prove the following two propositions.
The first proposition connect the completed Selberg zeta functions:
\[ \widehat{Z}_2^{\frac12}(s), \, \widehat{Z}_4(s), \dots, \widehat{Z}_m(s) \]
with the regularized determinants of Laplacians: 
\[ \mathrm{Det} \bigl( \square_2+s(s-1) \bigr), \,  \mathrm{Det} \bigl( \square_4+s(s-1) \bigr), \dots, \mathrm{Det} \bigl( \square_m+s(s-1) \bigr). \] 
The second proposition determines the explicit relations among them.
Theorem \ref{mainth} is deduced from these two propositions.

\begin{proposition}
Let $\square_m :=  \Delta_{0}^{(1)} \big|_{V_{m}^{(2)}}$ for $m \in 2 \N$. 
There exit polynomials $P_2(s),\ldots,P_m(s)$ such that
\[ \widehat{Z}_2^{\frac12}(s)=e^{P_2(s)}\, \frac{\mathrm{Det} \bigl( \square_2+s(s-1) \bigr)}{s(s-1)}, \quad 
\widehat{Z}_4(s) = e^{P_4(s)}  \,
\frac{ s(s-1) \cdot \mathrm{Det} \bigl( \square_4+s(s-1) \bigr)}{\mathrm{Det} \bigl( \square_2+s(s-1) \bigr)}, \]  
\[ \widehat{Z}_m(s) = e^{P_m(s)}  
\frac{\mathrm{Det} \bigl( \square_m+s(s-1) \bigr)}{\mathrm{Det} \bigl( \square_{m-2}+s(s-1) \bigr)}
\quad (m \ge 6).
\]
\end{proposition}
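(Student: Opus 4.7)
The strategy is to apply the double difference trace formula (Proposition~\ref{ddtrfm} for $m \geq 4$, and Proposition~\ref{ddtrf2} for $m=2$) with the heat-kernel test function $h_1(r) = e^{-t(r^2 + 1/4)}$, then take the Mellin transform $\frac{1}{\Gamma(w)} \int_0^\infty (\cdot)\, e^{-s(s-1)t}\, t^{w-1}\, dt$ of both sides, and finally differentiate at $w=0$. Under this procedure the spectral side converts $\theta_m(t) - \theta_{m-2}(t) + \delta_{m,4}$ into $\zeta_m(w,s) - \zeta_{m-2}(w,s) + \delta_{m,4}(s(s-1))^{-w}$, whose $-\partial/\partial w$ at $w=0$ equals $\log \mathrm{Det}(\square_m + s(s-1)) - \log \mathrm{Det}(\square_{m-2} + s(s-1)) + \delta_{m,4}\log(s(s-1))$. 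The $m=2$ case instead yields $\log \mathrm{Det}(\square_2 + s(s-1)) - \log(s(s-1))$, reflecting the $-h_1(i/2)$ term in Proposition~\ref{ddtrf2}.

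On the geometric side one evaluates the Mellin transform at $w=0$ of each contribution separately. The hyperbolic-elliptic term $HE_m(t)$ produces $\log Z_m(s)$ (respectively $\log \sqrt{Z_2(s)}$): the $t$-integral reduces via the Bessel identity $\int_0^\infty e^{-at - b/t} t^{-3/2}\, dt = \sqrt{\pi/b}\, e^{-2\sqrt{ab}}$ with $a = (s - \tfrac{1}{2})^2$ and $b = (\log N(\gamma))^2/4$ to $\frac{1}{\log N(\gamma)} N(\gamma)^{-(s-1/2)}$, and summing over the conjugacy classes with $\gamma = p^k$ reassembles the defining Dirichlet-type series of $\log Z_m(s)$. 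The $HS_m(t)$ (type~2 hyperbolic/scattering) term produces $\log Z_{\mathrm{hyp2/sct}}(s;m)$ by an identical computation. The elliptic term $E_m(t)$ yields $\log Z_{\mathrm{ell}}(s;m)$ via the integral representation of $\log \Gamma$, plus a polynomial in $s$. The identity term $I_m(t)$, whose asymptotics at $t \to 0^+$ are governed by $1/t$ and $1/\sqrt{t}$ singularities (Proposition~\ref{theta-asym}), yields $\log Z_{\mathrm{id}}(s)$ from the double-gamma Stirling expansion (Lemma~\ref{gamma2}); the singular parts contribute only polynomials in $s$ through the functions $\eta_1(w,s)$ and $\eta_{1/2}(w,s)$ from Proposition~\ref{spectral-zeta}. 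In the $m=2$ case the parabolic/scattering term $PS_2$ further contributes $\log Z_{\mathrm{par/sct}}^{\frac{1}{2}}(s;2) = -s\log\varepsilon$.

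Assembling all contributions and invoking the factorization of $\widehat{Z}_m(s)$ from Definition~\ref{zhat} (and its $m=2$ analogue), one obtains
\[
\log \widehat{Z}_m(s) = \log \frac{\mathrm{Det}(\square_m + s(s-1))}{\mathrm{Det}(\square_{m-2} + s(s-1))} + \delta_{m,4}\log(s(s-1)) + P_m(s)
\]
for $m \geq 4$, and an analogous formula for $m = 2$, where $P_m(s)$ is the polynomial collecting all regularization remainders; exponentiation gives the stated identities. The main obstacle is the clean bookkeeping: on each geometric term one must separate the part that exponentiates to a factor of $\widehat{Z}_m$ from a polynomial remainder, and this separation is most delicate in the identity contribution, where the full double-gamma asymptotic for $Z_{\mathrm{id}}$ must be matched against the Mellin transform of $I_m(t)$ while ensuring that every singular-in-$t$ piece of $\theta_m$ contributes only polynomial-in-$s$ corrections.
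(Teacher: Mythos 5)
Your overall strategy (heat kernel plus Mellin transform) is a legitimate classical route, and your treatment of the spectral side and of the hyperbolic-elliptic and type~2 hyperbolic/scattering terms is correct: the Bessel identity does convert $HE_m(t)$ into the Dirichlet series for $\log Z_m(s)$ and $HS_m(t)$ into $\log Z_{\mathrm{hyp2/sct}}(s;m)$ exactly. The gap is in the identity and elliptic contributions. For these you do not actually compute the Mellin transforms; you appeal to the Stirling expansion of $\Gamma_2$ (Lemma \ref{gamma2}) and to the small-$t$ asymptotics of $\theta_m$ (Proposition \ref{theta-asym}). Asymptotic matching as $s \to \infty$ cannot establish that two functions differ by a polynomial; to make the identity term work you must evaluate $\frac{1}{\Gamma(w)}\int_0^\infty I_m(t)\,e^{-s(s-1)t}t^{w-1}\,dt$ exactly (e.g.\ via a Hurwitz/Barnes zeta representation of $\log\Gamma_2$), and for the elliptic term the inner $u$-integral against $e^{-(s-\frac12)|u|}/|u|$ is divergent at $u=0$, so the pole of the $u$-integral must be played off against the zero of $1/\Gamma(w)$ before differentiating at $w=0$ --- this is precisely where the $\Gamma\bigl(\tfrac{s+l}{\nu_j}\bigr)$ factors arise, and none of it is carried out. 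Moreover, your specific claim that the singular-in-$t$ parts ``contribute only polynomials in $s$ through $\eta_1$ and $\eta_{1/2}$'' is false as stated: $-\partial_w\eta_1(w,s)\big|_{w=0}=s(s-1)\bigl(\log(s(s-1))-1\bigr)$ and $-\partial_w\eta_{1/2}(w,s)\big|_{w=0}=-2\sqrt{\pi}\,(s(s-1))^{1/2}$ are not polynomials; these logarithmic and square-root pieces must cancel into $\log Z_{\mathrm{id}}(s)$ and $\log Z^{\frac12}_{\mathrm{par/sct}}(s;2)$ respectively, which again requires the exact computation you omit. You are conflating the asymptotic analysis (which the paper uses only later, to pin down the constants $C_m$) with the derivation of the exact identity.

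For comparison, the paper avoids all of these Mellin evaluations: it invokes the resolvent-kernel form of the double-difference trace formula (\cite[Theorems 5.2 and 6.4]{G1}), in which the geometric side is already written as $\frac{1}{2s-1}\frac{d}{ds}\log$ of the factors of $\widehat{Z}_m$ plus auxiliary terms in fixed constants $\beta_h$ with quadratic-polynomial coefficients $c_h(s)$. Applying $\bigl(-\frac{1}{2s-1}\frac{d}{ds}\bigr)^{k}$ kills those auxiliary terms, and since $\frac{1}{2s-1}\frac{d}{ds}=\frac{d}{d(s(s-1))}$, the kernel of the $(k+1)$-fold operator consists exactly of polynomials in $s$; hence $\log\mathrm{Det}$ and $\log\widehat{Z}$ differ by a polynomial with no term-by-term transform needed. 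If you want to complete your route, you must supply the exact identity-term and elliptic-term evaluations (as in Sarnak, Efrat, or Koyama); as written, the proof has a genuine gap there.
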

\begin{proof}
Let $k$ be a sufficiently large natural number. We note that
\[  \Bigl( - \frac{1}{2s-1} \frac{d}{ds} \Bigr)^{k+1} \zeta_m(w,s) 
= w(w+1) \cdots (w+k) \, \zeta_m(w+k+1,s). \]  

Taking $\displaystyle{ -\frac{\partial}{\partial w}\Big|_{w=0}}$ of both sides, 
we have
\begin{equation} \label{det1}
\Bigl( - \frac{1}{2s-1} \frac{d}{ds} \Bigr)^{k+1} \log \mathrm{Det} \bigl( \square_m+s(s-1) \bigr) 
= - \sum_{j=0}^{\infty} \frac{k !}{ \bigl( \lambda_j(m)+s(s-1) \bigr)^{k+1}}.
\end{equation}
Let $m=2$, we use the following double differences of STF with the certain test function:
(See \cite[Theorem 6.4]{G1}.)
\begin{align*}
&  \sum_{j=0}^{\infty} \Bigl[ \frac{1}{\rho_j(2)^2+(s-\frac{1}{2})^2} 
 + \sum_{h=1}^{2}\frac{c_h(s)}{\rho_j(2)^2 + \beta_h^2} \Bigr]
-  \Bigl[ \frac{1}{s(s-1)} 
 + \sum_{h=1}^{2}\frac{c_h(s)}{\beta_h^2-\frac{1}{4}} \Bigr] \\
& =  \zeta_{K}(-1) \sum_{k=0}^{\infty}
\Bigl[ \frac{1}{s+k} 
+ \sum_{h=1}^{2} \frac{c_h(s)}{\beta_h+\frac{1}{2}+k} \Bigr]  
+ \frac{1}{2s-1}  \frac{\frac{d}{ds}\sqrt{Z_{2}(s)}}{\sqrt{Z_{2}(s)}}
+\sum_{h=1}^{2} \frac{c_h(s)}{2 \beta_h} 
\frac{\frac{d}{d \beta_h}\sqrt{Z_{2}(\frac{1}{2}+\beta_h)}}{\sqrt{Z_{2}(\frac{1}{2}+\beta_h)}}
\\
& \quad + \frac{1}{2s-1} 
\sum_{j=1}^{N} \sum_{l=0}^{\nu_j-1} \frac{\nu_j-1-2l}{2\nu_j^2}
\, \psi \Bigl( \frac{s+l}{\nu_j} \Bigr)
+ \sum_{h=1}^{2} \frac{c_h(s)}{2 \beta_h} 
\sum_{j=1}^{N} \sum_{l=0}^{\nu_j-1} \frac{\nu_j-1-2l}{2\nu_j^2}
\, \psi \Bigl( \frac{\frac{1}{2}+\beta_h+l}{\nu_j} \Bigr)
\\
& \quad + \frac{1}{2s-1} \frac{d}{ds} 
\log \bigl(  \varepsilon^{-s}   \bigr) 
+ \sum_{h=1}^{2} \frac{c_h(s)}{2 \beta_h} \frac{d}{d \beta_h} 
\log \bigl( 
\varepsilon^{-(\beta_h +1/2)}
\bigr) \\
& \quad + \frac{1}{2s-1} \frac{d}{ds} 
\log \biggl\{ \frac{1}
{(1-\varepsilon^{-2s})} \biggr\} 
+ \sum_{h=1}^{2} \frac{c_h(s)}{2 \beta_h} \frac{d}{d \beta_h} 
\log \biggl\{ 
\frac{1}
{(1-\varepsilon^{-(2 \beta_h +1)})}
\biggr\}.
\end{align*}
Here, $\psi(z)$ is the digamma function, $\beta_1 \ne \beta_2$ are constants and $c_1(s),c_2(s)$ are quadratic polynomials
invariant under $s \to 1-s$. 
Operating $\displaystyle{\Bigl( - \frac{1}{2s-1} \frac{d}{ds} \Bigr)^{k} }$ on both sides, 
we have
\begin{equation} \label{det2}
 \sum_{j=0}^{\infty} \frac{k !}{ \bigl( \lambda_j(2)+s(s-1) \bigr)^{k+1}}
=\Bigl( - \frac{1}{2s-1} \frac{d}{ds} \Bigr)^{k} 
\frac{1}{2s-1} \frac{d}{ds} \log 
\Bigl( \widehat{Z}_2^{\frac12}(s) \cdot s(s-1) \Bigr).
\end{equation}
By (\ref{det1}) and (\ref{det2}), we have
\[ \Bigl( - \frac{1}{2s-1} \frac{d}{ds} \Bigr)^{k+1} \log \mathrm{Det} \bigl( \square_2+s(s-1) \bigr) 
= \Bigl( - \frac{1}{2s-1} \frac{d}{ds} \Bigr)^{k+1}  \log \Bigl( \widehat{Z}_2^{\frac12}(s) \cdot s(s-1) \Bigr). \]

Therefore, we find that there exists a polynomial $P_2(s)$ such that
\begin{equation} \label{p2}
\log \mathrm{Det} \bigl( \square_2+s(s-1) \bigr)  + P_2(s)
  = \log \Bigl( \widehat{Z}_2^{\frac12}(s) \cdot s(s-1) \Bigr).
\end{equation}
Thus we have
\[ \widehat{Z}_2^{\frac12}(s)=e^{P_2(s)}\, \frac{\mathrm{Det} \bigl( \square_2+s(s-1) \bigr)}{s(s-1)}. \]

Let $m \ge 4$ be an even integer. We use the following double differences of STF with the certain test function:
(See \cite[Theorem 5.2]{G1}.)
\begin{align*}
& \sum_{j=0}^{\infty} \Bigl[ \frac{1}{\rho_j(m)^2+(s-\frac{1}{2})^2} 
 + \sum_{h=1}^{2}\frac{c_h(s)}{\rho_j(m)^2 + \beta_h^2} \Bigr] \\
& \quad - \sum_{j=0}^{\infty} \Bigl[ \frac{1}{\rho_j(m-2)^2+(s-\frac{1}{2})^2} 
 + \sum_{h=1}^{2} \frac{c_h(s)}{\rho_j({m-2})^2 + \beta_h^2} \Bigr] 
+ \delta_{m,4} \Bigl[ \frac{1}{s(s-1)} 
 + \sum_{h=1}^{2}\frac{c_h(s)}{\beta_h^2-\frac{1}{4}} \Bigr] \\
& = 2 \zeta_{K}(-1) \sum_{k=0}^{\infty}
\Bigl[ \frac{1}{s+k} 
+ \sum_{h=1}^{2} \frac{c_h(s)}{\beta_h+\frac{1}{2}+k} \Bigr] 
+ \frac{1}{2s-1} \frac{Z_{m}'(s)}{Z_{m}(s)}
+\sum_{h=1}^{2} \frac{c_h(s)}{2 \beta_h} 
\frac{Z_{m}'(\frac{1}{2}+\beta_h)}{Z_{m}(\frac{1}{2}+\beta_h)}
\\
& \quad + \frac{1}{2s-1} 
\sum_{j=1}^{N} \sum_{l=0}^{\nu_j-1} \frac{\nu_j-1-\alpha_l(m,j)-\overline{\alpha_l}(m,j)}{\nu_j^2}
\, \psi \Bigl( \frac{s+l}{\nu_j} \Bigr) \\
& \quad 
+ \sum_{h=1}^{2} \frac{c_h(s)}{2 \beta_h} 
\sum_{j=1}^{N} \sum_{l=0}^{\nu_j-1} \frac{\nu_j-1-\alpha_l(m,j)-\overline{\alpha_l}(m,j)}{\nu_j^2}
\, \psi \Bigl( \frac{\frac{1}{2}+\beta_h+l}{\nu_j} \Bigr) 
\\
& \quad + \frac{1}{2s-1} \frac{d}{ds} 
\log \biggl\{ \frac{(1-\varepsilon^{-(2s+m-4)})}
{(1-\varepsilon^{-(2s+m-2)})} \biggr\} 
+ \sum_{h=1}^{2} \frac{c_h(s)}{2 \beta_h} \frac{d}{d \beta_h} 
\log \biggl\{ 
\frac{(1-\varepsilon^{-(2 \beta _h +m-3)})}
{(1-\varepsilon^{-(2 \beta_h +m-1)})}
\biggr\}. 
\end{align*}
Here, $\beta_1 \ne \beta_2$ are constants and $c_1(s),c_2(s)$ are quadratic polynomials
invariant under $s \to 1-s$. 

Operating $\displaystyle{\Bigl( - \frac{1}{2s-1} \frac{d}{ds} \Bigr)^{k} }$ on both sides, 
we have
\begin{equation} \label{det4}
\begin{split}
 &\sum_{j=0}^{\infty} \frac{k !}{ \bigl( \lambda_j(m)+s(s-1) \bigr)^{k+1}}
  -\sum_{j=0}^{\infty} \frac{k !}{ \bigl( \lambda_j(m-2)+s(s-1) \bigr)^{k+1}} \\
&=\Bigl( - \frac{1}{2s-1} \frac{d}{ds} \Bigr)^{k} 
\frac{1}{2s-1} \frac{d}{ds} 
\Bigl( \log \widehat{Z}_m(s)  - \delta_{m,4} \log \bigl( s(s-1) \bigr) \Bigr).
\end{split}
\end{equation}
By (\ref{det1}) and (\ref{det4}), there exists a polynomial $P_m(s)$ such that
\begin{equation} \label{pm}
\begin{split}
& \log \mathrm{Det} \bigl( \square_{m}+s(s-1) \bigr)  
   -\log \mathrm{Det} \bigl( \square_{m-2}+s(s-1) \bigr)  
+ P_m(s) \\
&  = \log \widehat{Z}_m(s) - \delta_{m,4} \log \bigl( s(s-1) \bigr) .
  \end{split}
\end{equation}
We complete the proof.
\end{proof}

\begin{proposition}
We have
\begin{align*}
P_2(s)  =& \Bigl( s-\frac{1}{2} \Bigr)^2\zeta_{K}(-1) - \frac{1}{2} \log \varepsilon + \sum_{j=1}^{N}\frac{\nu_j^2-1}{12 \nu_j}\log \nu_j, \\ 
P_m(s) =& 2 \Bigl( s-\frac{1}{2} \Bigr)^2\zeta_{K}(-1) + \sum_{j=1}^{N} \frac{\nu_j^2-1 -12\alpha_0(m,j) \bigl\{ \nu_j - \alpha_0(m,j)  \bigr\} }{6 \nu_j}\log \nu_j \quad  (m \ge 4).
\end{align*}
\end{proposition}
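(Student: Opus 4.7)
The plan is to read off $P_2(s)$ and $P_m(s)$ from the previously established identities
\[
\log\mathrm{Det}\bigl(\square_2+s(s-1)\bigr)+P_2(s)=\log\bigl(\widehat Z_2^{\frac12}(s)\,s(s-1)\bigr),
\]
\[
\log\mathrm{Det}\bigl(\square_m+s(s-1)\bigr)-\log\mathrm{Det}\bigl(\square_{m-2}+s(s-1)\bigr)+P_m(s)
=\log\widehat Z_m(s)-\delta_{m,4}\log(s(s-1))
\]
by comparing asymptotic expansions as $s\to\infty$. Since $P_2(s)$ and $P_m(s)$ are polynomials, they are uniquely determined by the polynomial parts of the right-hand sides: every $\log s$ term must cancel, every polynomial term survives, and the $o(1)$ error is necessarily $0$ once everything else is accounted for.

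First I would substitute the asymptotic expansion of $\widehat Z_2^{\frac12}(s)$ from (\ref{z2-asym}) and the asymptotic expansion of $-\partial_w\zeta_2(w,s)|_{w=0}=\log\mathrm{Det}(\square_2+s(s-1))$ from (\ref{det2-asym}) into the first identity, together with $\log(s(s-1))=2\log s+o(1)$. Using $\log\frac{s}{\nu_j}=\log s-\log\nu_j$ and the defining formula $b_0(2)=-\sum_j\frac{\nu_j^2-1}{24\nu_j}$ (so that $-\sum_j\frac{\nu_j^2-1}{12\nu_j}=2b_0(2)$), I expect the $\log s$ contributions to cancel; the $s^2$, $s$ and constant terms then collapse to
\[
P_2(s)=\zeta_K(-1)\Bigl(s^2-s+\tfrac14\Bigr)-\tfrac12\log\varepsilon+\sum_{j=1}^{N}\frac{\nu_j^2-1}{12\nu_j}\log\nu_j
=\bigl(s-\tfrac12\bigr)^{\!2}\zeta_K(-1)-\tfrac12\log\varepsilon+\sum_{j=1}^{N}\frac{\nu_j^2-1}{12\nu_j}\log\nu_j,
\]
as claimed.

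For $m\ge 4$ I would perform the same comparison using (\ref{zm-asym}) for $\log\widehat Z_m(s)$ and (\ref{detmm-2-asym}) for the difference
$-\partial_w\zeta_m(w,s)|_{w=0}+\partial_w\zeta_{m-2}(w,s)|_{w=0}$. Here the $\delta_{m,4}$ terms on the two sides match each other, so the $\log s$ coefficient reduces to $-\sum_j\frac{\nu_j^2-1-12\alpha_0(m,j)\{\nu_j-\alpha_0(m,j)\}}{6\nu_j}-2b_0(m)$, which vanishes by the definition of $b_0(m)$. The surviving polynomial contribution is $2\zeta_K(-1)s^2-2\zeta_K(-1)s+\frac12\zeta_K(-1)=2(s-\tfrac12)^2\zeta_K(-1)$, and the surviving constant is $\sum_j\frac{\nu_j^2-1-12\alpha_0(m,j)\{\nu_j-\alpha_0(m,j)\}}{6\nu_j}\log\nu_j$, giving exactly the stated formula for $P_m(s)$.

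The main obstacle is purely bookkeeping: keeping track of the $\log\frac{s}{\nu_j}=\log s-\log\nu_j$ splitting, verifying the two independent cancellations of the $\log s$ coefficients against $2b_0(2)$ (respectively $2b_0(m)$), and cleanly assembling the $s^2,s$ and constant pieces into the completed-square forms $(s-\tfrac12)^2\zeta_K(-1)$ and $2(s-\tfrac12)^2\zeta_K(-1)$. Once the cancellations are checked, the identification of $P_2(s)$ and $P_m(s)$ is forced, and Theorem \ref{mainth} follows by exponentiating the two relations above.
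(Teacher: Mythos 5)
Your proposal is correct and follows essentially the same route as the paper: substitute the asymptotic expansions (\ref{z2-asym}), (\ref{det2-asym}), (\ref{zm-asym}), (\ref{detmm-2-asym}) into the relations (\ref{p2}) and (\ref{pm}), check that the $\log s$ coefficients cancel against $2b_0(2)$ and $2b_0(m)$, and use the fact that a polynomial determined up to $o(1)$ as $s\to\infty$ is determined exactly. The bookkeeping you outline (the $\log\frac{s}{\nu_j}=\log s-\log\nu_j$ split, the completed squares, the matching $\delta_{m,4}$ terms) is exactly what the paper carries out.
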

\begin{proof}
Substituting (\ref{z2-asym}) and (\ref{det2-asym}) in (\ref{p2}), we have
\begin{align*}
P_2(s) =&  \log \Bigl( \widehat{Z}_2^{\frac12}(s) \cdot s(s-1) \Bigr)
           - \log \mathrm{Det} \bigl( \square_2+s(s-1) \bigr) \\
          =&   \zeta_{K}(-1) \biggl\{ \frac{3}{2} s^2 
          - s -\Bigl( s^2-s+\frac{1}{3} \Bigr) \log s \biggr\} 
          - \sum_{j=1}^{N} \frac{\nu_j^2-1}{12 \nu_j} \log \frac{s}{\nu_j}  -s \log \varepsilon \\
          &+2 \log s + \zeta_{K}(-1)  \Bigl( s^2-s+\frac{1}{3} \Bigr) \log s 
           -\frac{1}{2} \zeta_{K}(-1) \cdot s^2 +s \log \varepsilon \\
          & -\Bigl( 2b_0(2) +2 \Bigr) \log s +\frac{1}{4} \zeta_{K}(-1) -\frac{1}{2} \log \varepsilon 
          +o(1) \quad (s \to \infty) \\
          =& \Bigl( s-\frac{1}{2} \Bigr)^2\zeta_{K}(-1) - \frac{1}{2} \log \varepsilon + \sum_{j=1}^{N}\frac{\nu_j^2-1}{12 \nu_j}\log \nu_j 
            +o(1) \quad (s \to \infty).
\end{align*}
Since $P_2(s)$ is a polynomial, we have the desired formula for $P_2(s)$. 

Let $m \ge 4$. 
Substituting (\ref{zm-asym}) and (\ref{detmm-2-asym}) in (\ref{pm}), we have
\begin{align*}
P_m(s) =&  \log \widehat{Z}_m(s)   - \delta_{m,4}  \log \Bigl( s(s-1) \Bigr)  \\
             &-\log \mathrm{Det} \bigl( \square_{m}+s(s-1) \bigr) 
               +\log \mathrm{Det} \bigl( \square_{m-2}+s(s-1) \bigr) \\
          =& 2 \Bigl( s-\frac{1}{2} \Bigr)^2\zeta_{K}(-1) + \sum_{j=1}^{N} \frac{\nu_j^2-1 -12\alpha_0(m,j) \bigl\{ \nu_j - \alpha_0(m,j)  \bigr\} }{6 \nu_j}\log \nu_j  
            +o(1) \quad (s \to \infty).
\end{align*}
Since $P_m(s)$ is a polynomial, we have the desired formula for $P_m(s)$. 
It completes the proof.
\end{proof}

\bibliographystyle{plain}
\def\cprime{$'$} \def\polhk#1{\setbox0=\hbox{#1}{\ooalign{\hidewidth
  \lower1.5ex\hbox{`}\hidewidth\crcr\unhbox0}}}
\providecommand{\bysame}{\leavevmode\hbox
to3em{\hrulefill}\thinspace}
\providecommand{\MR}{\relax\ifhmode\unskip\space\fi MR }
\providecommand{\MRhref}[2]{%
  \href{http://www.ams.org/mathscinet-getitem?mr=#1}{#2}
} \providecommand{\href}[2]{#2}

\end{document}